\newtheorem{thm}{Theorem}[section]
\newtheorem{lem}[thm]{Lemma}
\newtheorem{theorem theta}{Theorem}
\newtheorem{theorem main}{Theorem}
\theoremstyle{definition}
\theoremstyle{remark}
\numberwithin{equation}{section}
\begin{document}

\title[Computing character sums]{Computing Dirichlet character sums to a power-full modulus}
\author[G.A. Hiary]{Ghaith A. Hiary}
\thanks{Support by the Leverhulme Trust, and  
the National Science Foundation under agreements No.  DMS-0757627 (FRG grant) and DMS-0635607 (while at the Institute for Advanced Study).}
\address{Department of Mathematics, The Ohio State University, 100 Math Tower,
231 West 18th Ave, Columbus, OH 43210.}
\email{hiaryg@gmail.com}
\subjclass[2010]{Primary 11M06, 11Y16, 11L03; Secondary 11L40.}
\keywords{Character sums, exponential sums, Dirichlet $L$-functions, algorithms}

\maketitle

\begin{abstract}
The Postnikov character formula is used to express large portions of a Dirichlet character sum in terms of quadratic exponential sums. 
The quadratic sums are then computed using an analytic algorithm previously derived by the author. This
leads to a power-saving if the modulus is smooth enough. 
As an application, a fast, and potentially practical, method to compute Dirichlet $L$-functions with complexity 
exponent 1/3 for smooth enough moduli is derived.
\end{abstract}

\section{Introduction}\label{sec: intro}

Fast algorithms for numerically evaluating 
 $L$-functions at individual points have generated some interest recently.
The asymptotic power-savings achieved by such algorithms 
have shed light on the basic nature of $L$-functions.
Also, given the wider availability of more powerful computers, 
there is better hope to reach regimes where such algorithms become practical, even if they are quite involved. 
One of these algorithms has already been used to compute the Riemann zeta function in 
small neighborhoods of many large values and to examine the zero distributions
there, see~\cite{H4,Bob} for examples of resulting data.
The fast algorithms for computing $L$-functions at individual points
can in some cases be combined with amortized-complexity (or \textit{average} cost) 
methods, such as \cite{H3},
to enable large-scale numerical studies at very large height and level, and in windows of considerable size, which   
is useful for testing random matrix theory predictions for $L$-functions.

New algorithms for computing the Riemann zeta function $\zeta(s)$ at individual points 
were derived in \cite{H1,H2}. One algorithm is capable of
numerically evaluating $\zeta(1/2+it)$ for $t>1$ with an error bounded by $O(t^{-\lambda})$ 
using $t^{1/3+o_{\lambda}(1)}$ bit operations. 
Another, faster, algorithm requires $t^{4/13+o_{\lambda}(1)}$ bit operations
and $t^{4/13+o_{\lambda}(1)}$ bits of storage.
There are interesting analogies between these algorithms,
which rely on fast computation of exponential sums, 
and methods for proving subconvexity estimates for zeta,
meaning bounds of the form $\mu_{\zeta}(1/2) \le 1/4 - \delta'$, 
for some $\delta'>0$, where $\mu_{\zeta}(1/2)$ is the infimum of all the numbers $\eta$ 
such that $\zeta(1/2+it) = O(|t|^{\eta})$. For example, both  
 the $t^{1/3+o_{\lambda}(1)}$ algorithm and
the Weyl-Hardy-Littlewood method (which yields the bound
$\mu_{\zeta}(1/2) \le 1/6$; see \cite[5.3]{T}) 
start by subdividing the ``main sum'' of zeta in a similar way.
In both cases, the main difficulty is reduced to understanding quadratic exponential 
sums $K^{-j} \sum_{0\le k<K} k^j e^{2\pi i \alpha k + 2\pi i \beta k^2}$,
where $\alpha,\beta \in [0,1)$. But unlike 
the Weyl-Hardy-Littlewood method, which detects cancellation in the quadratic sum 
due to the linear argument $\alpha$ solely, the iterative
algorithm for computing quadratic sums, which will play an 
essential role in this paper, relies in a crucial way on 
controlling the length of the sum via the quadratic argument
$\beta$. This algorithm, which is derived in \cite{H1}, 
repeatedly applies van der Corput methods, together with
an intervention to normalize $\alpha$ and $\beta$ suitably, and in a process
reminiscent of the method of exponent pairs of van der Corput and Phillips
(see \cite[5.20]{T}). The effect is to shorten the length of the exponential sum
with each iteration by a factor of $1/2$ or better so that the algorithm
finishes using $O(\log K)$ iterations.

Van der Corput methods were also the inspiration behind the $t^{4/13+o_{\lambda}(1)}$ algorithm,
that time leading to cubic 
sums $K^{-j} \sum_{0\le k<K} k^j e^{2\pi i \alpha k + 2\pi i \beta k^2 + 2\pi i
\gamma k^3}$,
where the situation was much more complicated (even though the range of $\gamma$ happened to be very restricted in application).
The $t^{4/13+o_{\lambda}(1)}$ algorithm relies, in addition, on a fast Fourier transform (FFT) precomputation, which resembles
the precomputation in Sch\"onhage's method~\cite{S} for computing zeta in
$t^{3/8+o_{\lambda}(1)}$ time.
In light of the similarities pointed so far between fast algorithms and subconvexity estimates for
zeta, 
it might be of interest to try to understand the complexity exponent $4/13$ in the subconvexity context.

In the $\textrm{GL}_2$ setting, Vishe has derived an algorithm in \cite{V1}
that permits the accurate computation of 
$L(1/2+it,\tilde{f})$, where $\tilde{f}$ is a fixed 
 modular cusp form (so big-$O$ constants depend on $\tilde{f}$), 
 holomorphic or Maass,
of weight $k$ for a congruence subgroup of $\textrm{SL}_2(\mathbb{Z})$, 
using $O(1+t^{7/8+\epsilon'})$ operations, where 
$t>0$ and $\epsilon'$ is any fixed positive real.
The method employed in \cite{V1}
is described as geometric, and is closely related to subconvexity
estimates for $\textrm{GL}_2$ $L$-functions.  
It ultimately uses a direct precomputation (not involving the FFT) to obtain its power-savings. 
The method relies on an integral representation of $L(s,\tilde{f})$, and has the useful feature 
that only the first few coefficients of the cusp form are needed for computing the corresponding
$L$-function. Vishe has also derived an algorithm in \cite{V2}
for computing the central value $L(1/2,\tilde{f}_1 \times\chi)$, where $\chi$ is a character $\textrm{mod}\,\,q$
and $\tilde{f}_1$ is a fixed cusp form of weight $k$ for the full modular group,
with complexity as good as $q^{5/6+o(1)}$ when $q$ is highly composite.
His method is closely related to recent work of Venkatesh on 
subconvexity estimates for $L(1/2,\tilde{f}_1 \times\chi)$.

The goal of this article is to extend the investigations in \cite{H1,H2} 
 to the so-called $q$-aspect.
We derive a potentially practical algorithm for computing $L(s,\chi)$, 
where $\chi$ is a character $\textrm{mod}\,\,q$, with an error bounded
by $O(q^{-\lambda}(|s|+1)^{-\lambda})$ using
$q^{1/3+o_{\lambda}(1)}(|s|+1)^{1/3+o_{\lambda}(1)}$ operations
when $q$ is smooth enough, where smooth enough means 
 that the radical of $q$ is small enough compared with $q$, or 
$q$ is power-full. 
Specifically, we prove the following upper bound on the number of operations 
 required for computing $L(s,\chi)$. (Our computational model is detailed 
 in \textsection{\ref{comp model}},
 including the meaning of \textit{operation} and how to input the character
 $\chi$ to the algorithm.)
\begin{thm}\label{mainthm}
There are absolute constants $A_1,\ldots,A_5$,
$\kappa_1$, $\kappa_2$, and $\kappa_3$, such that for any real number $\lambda$, 
any complex number $s$ with  $1/2 \le \Re{s} \le 1$, any positive integer 
$q = p_1^{a_1}\cdots p_h^{a_h}$ (where $p_j$ are distinct primes), 
and any given character $\chi \bmod{q}$,
the value of the Dirichlet $L$-function $L(s, \chi)$ can be computed 
to within $\pm\, q^{-\lambda} (|s|+1)^{-\lambda}$ using 
$\le A_1\, p_1^{\lceil a_1/3\rceil}\cdots p_h^{\lceil a_h/3\rceil} (|s|+1)^{1/3} \,(\lambda+1)^{\kappa_1}\, \log^{\kappa_1}(q(|s|+1))$ operations on numbers of $\le A_2\, (\lambda+1)^4\, \log^4 (q(|s|+1))$ bits, provided a precomputation, that depends on 
$q$ only, costing
$\le A_3 \,(p_1+\cdots +p_h)\, \log^{\kappa_2} q$ operations on 
numbers of $\le A_4\, \log q$ bits, and requiring $\le A_5 \,(p_1+\cdots +p_h) \, \log^{\kappa_3} q$  bits of storage, is performed.
\end{thm}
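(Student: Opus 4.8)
The plan is to reduce the computation of $L(s,\chi)$ to that of short quadratic exponential sums of the type handled by the algorithm of \cite{H1}, and to organise the reduction so that only $\prod_j p_j^{\lceil a_j/3\rceil}(|s|+1)^{1/3}$ such sums arise, up to logarithmic factors. I would start from a standard approximate functional equation, writing $L(s,\chi)=\sum_n \chi(n)n^{-s}V(n/X)+\varepsilon(s,\chi)\sum_n \bar\chi(n)n^{s-1}W(n/Y)+O(q^{-\lambda}(|s|+1)^{-\lambda})$ with $X,Y\ll (q(|s|+1))^{1/2}(\lambda\log(q(|s|+1)))^{O(1)}$, a fixed smooth cutoff, and a root number $\varepsilon(s,\chi)$ that is itself a normalised Gauss sum (hence a character sum of the same shape and of length $q^{1/2+o(1)}$, computed by the same procedure). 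This reduces everything to evaluating finite sums $S:=\sum_{n\le N}\chi(n)n^{-s}V(n/X)$ with $N=q^{1/2+o(1)}(|s|+1)^{1/2+o(1)}$, i.e.\ to shaving a length-$N$ sum down to about $N^{2/3}$ operations.

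Next I would invoke the Postnikov formula. Put $Q'=\prod_j p_j^{b_j}$ with $b_j=\lceil a_j/3\rceil$, partition $[1,N]$ into the arithmetic progressions $n\equiv n_0\pmod{Q'}$ with $\gcd(n_0,q)=1$ (there are fewer than $Q'=\prod_j p_j^{\lceil a_j/3\rceil}$ of them), and on such a progression write $n=n_0+Q'k$. For each odd $p_j$, Postnikov's formula expresses $\chi_j(n_0^{-1}n)$ as $e$ of the truncated $p_j$-adic logarithm of $1+p_j^{b_j}u_j k$ with $u_j=(Q'/p_j^{b_j})n_0^{-1}$ a $p_j$-unit; the normalised coefficient of $k^i$ in that logarithm is a unit times $p_j^{\,b_j i-a_j-v_{p_j}(i)}/i$, so the choice $b_j=\lceil a_j/3\rceil$ makes the terms with $i\ge 3$ integral modulo $p_j^{a_j-1}$, with the sole exception of $p_j=3$ when $3b_j=a_j$ (and, analogously, of $p_j=2$ treated relative to the generator $5$), which is removed by first splitting $k$ into $O(1)$ residue classes to a bounded modulus. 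Summing over $j$, one gets $\chi(n)=\chi(n_0)\,e(Q_{n_0}(k))$ on each progression, where $Q_{n_0}$ is an \emph{explicit quadratic polynomial} with rational coefficients of denominator dividing $\prod_j p_j^{a_j-1}$, assembled in $O(\log q)$ operations from precomputed data, while $\chi(n_0)$ is tracked incrementally as $n_0$ runs multiplicatively through the reduced residues mod $Q'$.

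It remains to handle the analytic factor on each progression. Writing $k=k_0+k'$ and $\tilde n_0=n_0+Q'k_0$, we have $(n_0+Q'k)^{-s}V(\cdot)=\tilde n_0^{-s}\exp\!\big(-s\log(1+Q'k'/\tilde n_0)\big)V(\cdot)$. I would cut each progression into blocks on which $Q'K/\tilde n_0\ll (|s|+1)^{-1/3}$ and $Q'K/\tilde n_0\le 1/2$: then the degree-$\ge 3$ Taylor terms of $\log(1+\cdot)$ contribute below the target tolerance, the quadratic Taylor terms combine with $Q_{n_0}$ into a phase $\alpha k'+\beta(k')^2$, and the slowly varying factor $(1+Q'k'/\tilde n_0)^{-\Re s}V(\cdot)$ is, on the block and up to tolerance, a polynomial $P(k')$ of degree $O(\log(\lambda\log(q(|s|+1))))$ (after, if needed, a further bounded refinement of the blocks to control the fixed cutoff). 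Each block is thus a sum $\sum_{k'}P(k')e(\alpha k'+\beta(k')^2)$ — precisely the input of the algorithm of \cite{H1}, which returns its value in $(\lambda\log(q(|s|+1)))^{O(1)}$ operations on numbers of $O((\lambda+1)^4\log^4(q(|s|+1)))$ bits. A geometric subdivision in $\tilde n_0$ gives $O((|s|+1)^{1/3}\log(q(|s|+1)))$ blocks per progression (plus $O((|s|+1)^{1/3})$ short initial terms evaluated directly), hence $O\!\big(\prod_j p_j^{\lceil a_j/3\rceil}(|s|+1)^{1/3}\log(q(|s|+1))\big)$ calls to the exponential-sum algorithm in all; summing the returned values weighted by $\chi(n_0)\tilde n_0^{-s}$ yields $S$.

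The $q$-only precomputation consists of a primitive root modulo each $p_j$ (by trial, $\tilde O(p_j)$ operations) and the Postnikov constant of $\chi_j$ on $1+p_j\mathbb Z_{p_j}$, obtained by a discrete logarithm in a group of $p_j$-power order via Pohlig--Hellman, $\tilde O(p_j)$ operations; this totals $\tilde O(p_1+\cdots+p_h)$ operations and storage, matching the stated bounds. The main obstacle, as is typical for algorithms of this kind, will be the uniform error and precision analysis: keeping track of the approximate-functional-equation truncation, the degree-$2$ Taylor truncations on each of the $\asymp Q'(|s|+1)^{1/3}$ blocks, the polynomial-amplitude approximations, and the internal numerical error of the \cite{H1} algorithm — all at once, for every admissible $q$ (in particular when some $a_j$ is small or $p_j\in\{2,3\}$) and every $s$ with $1/2\le\Re s\le1$ — so that the accumulated error stays below $q^{-\lambda}(|s|+1)^{-\lambda}$ while the working precision stays $O((\lambda+1)^4\log^4(q(|s|+1)))$ bits; once the correct cutoffs $Q'$ and the block length $\asymp \tilde n_0/(Q'(|s|+1)^{1/3})$ are fixed, the combinatorial decomposition itself is routine.
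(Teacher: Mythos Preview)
Your overall architecture is the same as the paper's: Postnikov with $b_j=\lceil a_j/3\rceil$ to turn $\chi$ into a quadratic exponential on progressions modulo $Q'=\prod_j p_j^{b_j}$, a block decomposition in the $t$-aspect with block length $\asymp \tilde n_0/(|s|+1)^{1/3}$, and then the quadratic-sum algorithm of \cite{H1}. The paper reverses the order of the two reductions (Taylor-expand $n^{-s}$ on blocks first, then apply Postnikov inside each block via its Theorem~\ref{lem:l2}), and it replaces your smoothed approximate functional equation by the cruder truncation $L(s,\chi)=\sum_{n<M}\chi(n)n^{-s}+O(q^{1/2}\log q\,(|s|+1)M^{-\Re s})$ coming from P\'olya--Vinogradov; this avoids having to compute the root number and keeps the main sum a pure Dirichlet polynomial, at the harmless cost of a longer $M$ (which only affects polylog factors). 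These are organisational differences, not mathematical ones.

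There is, however, a genuine slip in your Taylor step. With $Q'K/\tilde n_0\ll(|s|+1)^{-1/3}$ the degree-$\ge 3$ terms of $-s\log(1+Q'k'/\tilde n_0)$ are \emph{not} below the target tolerance: the cubic term alone has size $|s|\,(Q'K/\tilde n_0)^3/3=O(1)$, and the next few terms are likewise only bounded, not small. You cannot discard them; the paper keeps them and absorbs the factor
\[
\exp\!\Bigl(-\Re(s)\tfrac{k}{v}+\Re(s)\tfrac{k^2}{2v^2}-s\tfrac{k^3}{3v^3}+\cdots\Bigr)
\]
into the polynomial amplitude by Taylor-expanding this whole $O(1)$-sized exponential in $k/K$. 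A Cauchy estimate then gives coefficients decaying like $(5/4)^{-j}$, so the needed degree is $J\ll(\lambda+1)\log(q(|s|+1))$, not $O(\log(\lambda\log(q(|s|+1))))$ as you wrote; even your pure amplitude $(1+Q'k'/\tilde n_0)^{-\Re s}V(\cdot)$ already needs degree $\asymp\log(1/\epsilon)$ to reach tolerance $\epsilon$. This correction does not change the final operation count beyond the polylog exponent, but as written your argument would produce an $O(1)$ error on every block.
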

The precomputation in Theorem~\ref{mainthm} comes directly from lemma~\ref{lem:compC}, which furnishes a procedure 
for computing individual values of $\chi$ to within $\pm \epsilon$ in 
poly-log time (in $q$ and $1/\epsilon$) using precomputed values.
Theorem~\ref{mainthm} assumes that the factorization of $q$ is given,
but this is not essential since there are algorithms with provable
complexity for factoring $q$ at a cost that is subsumed by the overall cost of
our algorithm (e.g. Lehman's method; see \cite{CP}).
The condition $1/2 \le \Re{s} \le 1$ in the theorem is not essential,
and is inserted partly to simplify dealing with the $L$-functions associated with 
imprimitive characters; see the discussion preceding the proof of Theorem~\ref{mainthm} in \textsection{\ref{sec: app}}. 
The upper bound $O_{\lambda}(\log^4(q(|s|+1)))$
for the number of bits is generous, and is to simplify various proofs; 
it comes directly from Theorem~\ref{lem:l2}, and all that is 
required otherwise is $O_{\lambda}(\log(q(|s|+1)))$ bit arithmetic.
The algorithm can be modified easily so that arithmetic is wholly performed using $O_{\lambda}(\log (q(|s|+1)))$ 
bits, which is what one should do in a practical version. 
The algorithm applies uniformly in $q$ and $s$, 
and represents the first power-saving in the $q$-aspect 
over previous algorithms (which consume $\gg q^{1/2+o_{\lambda}(1)}$ time) 
for any class of Dirichlet $L$-functions. 

Part of the story of the algorithm in Theorem~\ref{mainthm}
is the general analogy between the $t$-aspect and 
the depth aspect (highly power-full moduli such as $q=p^a$, $a\to \infty$) 
in the theory of character sums and Dirichlet $L$-functions.
Indeed, the power-full structure of the modulus will play an important role
in the algorithm. We note, though, that it is not necessary for the exponent $a$ to be large in order for the algorithm
to perform its best. The running time  
$q^{1/3+o_{\lambda}(1)}(|s|+1)^{1/3+o_{\lambda}(1)}$ is still achieved
if simply $q=p^a$ and $3 \mid a$ (for example, if $q=p^3$), 
and, more generally,  if the exponents of the prime factors of $q$ are divisible by 3.
The algorithm does not provide new savings when $q$ is 
square-free (e.g.\ if $\chi$ is a real primitive character) 
since a direct application of it in this case requires about $q(|s|+1)^{1/3+o_{\lambda}(1)}$ time. 

Our algorithm is related to proofs of subconvexity estimates for $L(1/2+it,\chi)$, 
which further strengthens the apparent connection 
between algorithms and subconvexity estimates for $L$-functions.
Indeed,  one of the essential steps in deriving the algorithm 
is a specialization of the Postnikov character formula, stated in lemma~\ref{lem:l1}, 
where we exploit the power-full structure of the modulus $q$.
Postnikov's formula was employed 
by Barban, Linnik, and Tshudakov in \cite{BLT}
 to study the same family of Dirichlet $L$-functions tackled here. 
 They proved the estimate $\sum_{n\le N} \chi(n) \ll \sqrt{N} q^{1/6} (\log q)^{1/2}$, where $q=p^n$,
$p \ge 3$ is any fixed prime, $n > n_0$, $ N \le q^{2/3}$, and $\chi\bmod{q}$ is a 
non-principal character\footnote{It was mentioned in \cite[lemma 6]{BLT} that the estimate $\sum_{n\le N} \chi(n) \ll \sqrt{N} q^{1/6} (\log q)^{1/2}$
 would still hold for $N > q^{2/3}$ since it would be a consequence of the P\'olya-Vinogradov inequality. But this  
 seems to miss an extra factor of $(\log q)^{1/2}$, which in turn impacts
 the bound for $|L(1/2+it,\chi)$ by an extra factor of $(\log q)^{1/2}$. 
 Nevertheless, we stated the bounds here the same as in \cite{BLT}.},
 from which the estimate $|L(1/2+it,\chi)| \ll (|t|+1) q^{1/6} (\log q)^{3/2}$ was deduced.

The connection between algorithms and subconvexity estimates 
is somewhat puzzling, especially since, a priori, there is no compelling reason
for it  to exist. 
Fast algorithms work because 
they are able to express the $L$-function using fewer ``terms'' (whose 
sizes matter on a \textit{logarithmic scale} only),  whereas
subconvexity estimates rely on detecting cancellation 
among terms, and involve certain critical steps that are too crude for computation, 
or for which there is no simple computational analogue, such as elementary applications of 
the Cauchy-Schwarz inequality. Also, despite the parallels between the $t^{1/3+o_{\lambda}(1)}$ algorithm 
and, both, the Weyl-Hardy-Littlewood method and the method of exponent pairs, 
the $t^{1/3+o_{\lambda}(1)}$ algorithm still does not yield the bound $\mu_{\zeta}(1/2) \le 1/6$. 
This is because the algorithm
does not guarantee any cancellation should occur in the quadratic sum as it
repeatedly applies van der Corput methods to it. The only way the algorithm
could sense (or roughly distinguish) the size of the quadratic sum is via the total number
of iterations that it uses in the computation. But this number is only poly-log in the length of the sum, 
and, therefore, variations in it do not affect the power-savings. 
Furthermore, consider that the main difficulty in improving the $t^{4/13+o_{\lambda}(1)}$ 
algorithm is not that certain terms 
are getting too large, but that a certain FFT precomputation, for 
which  there is no clear analogue when bounding $\mu_{\zeta}(1/2)$, becomes too 
expensive. 

We note that the algorithm for computing 
$L(s,\chi)$, which is stated in Theorem~\ref{mainthm}, actually relies on
a character sum estimate, namely the P\'olya-Vinogradov inequality, 
to obtain an upper bound for $|\sum_{n\ge M} \chi(n) n^{-s}|$. 
This reduces the computation of $L(s,\chi)$ 
to computing a main sum $\sum_{n<M} \chi(n)n^{-s}$, where $M$ 
is chosen according to the desired precision; see \eqref{eq:lchierr}.  
However, our use of the P\'olya-Vinogradov inequality is not essential because,
as the discussion in \textsection{\ref{sec: app}} shows, 
the P\'olya-Vinogradov inequality  can be replaced 
at a small loss by the trivial estimate $|\sum_{n<N} \chi(n)|< q$.
In general, the available character sum estimates 
help us to obtain a shorter main sum for $L(1/2+it,\chi)$,
but not short enough to improve the algorithmic power-savings.
It might be worth mentioning, however, that there are examples of algorithms 
that achieve their saving by directly using a character sum estimate
to obtain a relatively short main sum when $\Re(s)$ is large enough. 
For instance, the algorithm of Booker~\cite{Boo}, which can certify 
the output of Buchmann's conditional algorithm for computing the class number of a quadratic 
number field $\mathbb{Q}(\sqrt{d})$ in time $|d|^{1/4+\epsilon'}$ if the output of Buchmann's algorithm
is correct (as expected), and in time $|d|^{1/2+\epsilon'}$ otherwise, 
directly uses Burgess' theorem (see \cite{GL}) 
to obtain a bound on the truncation error in a smoothed approximate functional
equation for $L(1,\chi)$.
This bound is then shown to suffice considering that the class number needs to be 
computed to within half an integer only.

In the remainder of the introduction, we overview the structure of the paper.
In \textsection{\ref{comp model}}, we discuss our underlying computational
model.
In \textsection{\ref{sec:afe}}, we provide background material on some previous methods, 
which will help place the algorithmic improvements obtained here into context.
We also provide a sketch of our method for computing the character sums 
\begin{equation}
S_{\chi}(K):=\sum_{0\le k <K} \chi(k)\,,
\end{equation} 
which is the main step towards the algorithm for $L(s,\chi)$. 
Notice that, while $S_{\chi}(K)$ and $L(s,\chi)$ are directly related, 
for example via $L(s,\chi) = s\int_1^{\infty} S_{\chi}(x)x^{-s-1}\,dx$,
$\Re(s)>0$,
it is not immediate (or necessary) that savings achieved in computing $S_{\chi}(K)$
must translate to savings in computing $L(s,\chi)$.
In our case, though, the method for computing $S_{\chi}(K)$ generalizes naturally to $L(s,\chi)$.
In \textsection{\ref{sec:compS}}, we  state
and prove several needed results, starting with the simpler case of $S_{\chi}(K)$.
Then, as an application, we prove in \textsection{\ref{sec: app}} the complexity of 
the algorithm for $L(s,\chi)$ in Theorem~\ref{mainthm}.
Last, in \textsection{\ref{sec:general sum}}, we remark on the general modulus case.

\section{Computational model and notation} \label{comp model}
We now specify our computational model.
Real numbers are represented using a fixed point system in base-2. 
So when we write that operations are performed using $\mathcal{B}$-bit 
arithmetic, it means that real numbers are represented using $\mathcal{B}$ bits
to the left of the radix point (integer bits), and $\mathcal{B}$ bits to the right of the 
radix point (fraction bits), together with a sign bit (specifying 
whether the number is positive or negative). The position of the radix point is fixed.
This system can accommodate integers easily by removing the fraction bits entirely, and 
requiring an extra bit (flag) to tell whether the number is an integer.
An integer can be coerced into a real number by padding it with all zero fraction bits
then flipping the integer flag, and vice versa.
We can represent a real number $x$ using a $\mathcal{B}$-bit fixed system if 
$\log |x| /\log 2 < \mathcal{B}$, otherwise an overflow problem could occur. 
Notice that, if there is no overflow, then $x$ can be represented with a
round-off error of $\pm 2^{-\mathcal{B}}$.
Similarly, to represent an integer $m$ exactly, we need $\log |m| /\log 2 < \mathcal{B}$. 
Our theorems and lemmas 
will always request a large enough value of $\mathcal{B}$ to guarantee no overflow  
ever occurs. So, when we write that it suffices to use $\mathcal{B}$-bit 
arithmetic in a certain algorithm, it implicitly means that 
$\log \mathcal{M} /\log 2 < \mathcal{B}$,
where $\mathcal{M}$ is the largest number 
that ever occurs in the algorithm (regardless of the order of operations). 
In our algorithm, the number $\mathcal{M}$ depends on $s$ and $q$ only. It
will be implicitly tracked during the derivations, and will manifestly satisfy a bound of the form
 $0\le \log\mathcal{M} < \tilde{A}_0((\lambda+1)\log(q(|s|+1)))^4$ for some absolute constant $\tilde{A}_0$,
 which is generous.

Our basic real operations 
are addition, multiplication, division by a non zero number, the cosine and the sine, exponentiation, and taking the logarithm of a positive number. 
We take for granted that there are algorithms to perform each of these operations 
in the $\mathcal{B}$-bit fixed point system using 
$\le \tilde{A}_1\mathcal{B}^{\tilde{\kappa}_1}$ bit operations and $\le \tilde{A}_2\mathcal{B}^{\tilde{\kappa}_2}$
bits of storage, where $\tilde{A}_1$, $\tilde{A}_2$, $\tilde{\kappa}_1$, and $\tilde{\kappa}_2$, 
are absolute numbers, and such that if no overflow occurs then
the final output of the operation is correct to within
$\pm 2^{-\mathcal{B}+\mathcal{F}}$, where $\mathcal{F}$ is an
absolute number.
Let $\mathcal{C}$ be an upper bound on the total number of basic real operations
consumed by a given algorithm. 
Then, an upper bound for the round-off error accumulated during a full run of the algorithm is $\mathcal{C} \mathcal{M} 2^{-\mathcal{B}+\mathcal{F}}$. 
So, if the final output is requested with an error tolerance $\epsilon \in (0, e^{-1})$,
then it suffices to ensure that $\mathcal{C} \mathcal{M} 2^{-\mathcal{B}+\mathcal{F}} < \epsilon$, 
or $(\log \mathcal{M} + \log\mathcal{C} + \log(1/\epsilon))/\log 2 + \mathcal{F} < \mathcal{B}$,
which is one way to determine upper bounds for the required number of bits in our algorithms.

We make a similar assumption about the existence of algorithms to perform 
the basic integer operations: Addition, multiplication, 
and checking equality of two integers,  
such that operations can be performed exactly in the 
fixed point system, provided there is no overflow problem.
(Division of integers is carried out using real numbers.)
We assume that the elements of the ring $\mathbb{Z}/p^a \mathbb{Z}$ are modelled by (or identified with) 
the set of numbers $\{0,\ldots,p^a-1\}$, in the obvious way. The basic ring operations are 
addition, multiplication, and checking equality of two elements. 
Determining the multiplicative inverse, if it exists, of an element in $\mathbb{Z}/p^a \mathbb{Z}$
can be done via the Euclidean algorithm, which is fast.
Ring operations are performed exactly using integer arithmetic in our system 
and by reducing modulo $p^a$.

Our algorithms will sometimes request to perform a precomputation, 
and to store the output in main memory for later retrieval.  
We assume that any randomly-chosen precomputed value can be quickly retrieved
from main memory
in roughly the same amount of time. 
This is usually a realistic assumption in core memory areas, and is one feature of the 
Random Access Machine model in complexity analysis, as mentioned in \cite{LMO} for example.
(This assumption is actually not essential for Theorem~\ref{mainthm}, but this is not important.) 
For the sake of definiteness, let us suppose that if 
a precomputation results in $\mathcal{T}$ numbers say, where
each number is represented using a $\mathcal{B}$-bit fixed point
system, then the cost of retrieving a precomputed number
is uniformly bounded by $\le
\tilde{A}_3(\mathcal{B}+\log\mathcal{T})^{\tilde{\kappa}_3}$ bit operations,
where $\tilde{A}_3$ and $\tilde{\kappa}_3$ are absolute constants. 

The computational complexity of the algorithms here is measured by 
the number of integer, real, and ring operations (or simply, operations) consumed.
This in turn can be routinely bounded in terms of bit operations
since all the numbers that occur in Theorem~\ref{mainthm} 
can be expressed using a poly-log number of bits in $q$ and $|s|$.
We will specify what it means for a character $\chi$ to be ``given'' 
as an input to algorithms at the beginning of \textsection{\ref{sec:compS}}.
When we write: ``$L(s,\chi)$ can be computed to within $\pm\,q^{-\lambda} (|s|+1)^{-\lambda}$,'' it
means that, given $s$ and $\chi$, we can find a number $\tilde{L}(s,\chi)$
such that $L(s,\chi) - \tilde{L}(s,\chi) = e^{i\omega_0} \epsilon_0$ for some unknowns $\omega_0 \in \mathbb{R}$ 
and  $-q^{-\lambda} (|s|+1)^{-\lambda} \le \epsilon_0 \le q^{-\lambda} (|s|+1)^{-\lambda}$.
So, in particular, $|L(s,\chi) - \tilde{L}(s,\chi)| \le q^{-\lambda} (|s|+1)^{-\lambda}$.

Last, we remark that in practice one typically uses a floating point system for representing numbers, 
not a fixed point system. This said, the asymptotic power-savings of our algorithms are independent 
of which system is used. The reason we prefer a fixed point system in the exposition is 
to make it fairly simple to determine the number of bits needed to perform basic operations. 
Also, in practice, it is worthwhile to minimize the use of expensive multi-precision 
arithmetic by carefully using Taylor expansions, 
precomputations, suitable orders of operations, and various standard tricks. 

\vspace{2mm}

\noindent
\textit{Notation.}  We have been using asymptotic notation. For completeness, 
let us define it explicitly. Following \cite[p. xiii]{D}, we write $f_2(x) = O(f_3(x))$, 
or equivalently $f_2(x) \ll f_3(x)$, when there is an absolute constant
$C_1$ such that $|f_2(x)| \le C_1 f_3(x)$ for all values
of $x$ under consideration. In this paper, the ``values of $x$ under consideration''
is always a set of the form $x \ge C_2$, where $C_2$ is an absolute constant. 
We write $f_2(x) = o(f_3(x))$ when
$\lim f_2(x)/f_3(x) = 0$, where the limit is always taken as $x\to \infty$
in this paper.
When we write $O_{\tilde{\lambda}}(.)$
or $o_{\tilde{\lambda}}(1)$, it means the implied constants 
depend on $\tilde{\lambda}$.
So, for example, when we write $O_{\tilde{\lambda}}(.)$, 
it mean that $C_1$ and $C_2$ depend on $\tilde{\lambda}$. 
If no dependence is indicated, then the implied
constants are absolute (but for more emphasis, we will frequently state
this explicitly). We will often refer to poly-log factor 
in some (positive) parameters $x_1,\ldots,x_{r'}$, which means
a factor of the form $A (\log(x_1+3) + \cdots + \log(x_{r'}+3))^{\kappa}$, 
where $A$ and $\kappa$ are absolute constants.

\section{Background and the basic idea} \label{sec:afe}

In \cite[Theorem 1]{BF}, Bombieri and Friedlander prove, 
for a fairly large class of $L$-functions, that it is not possible to approximate  
a fixed $L$-function $L(s)=\sum_{n=1}^{\infty} c_n n^{-s}$ 
(so big-$O$ constants will depend on $L$),
in a window $T\le \Im(s)\le 2T$ and just to the left of the critical line, 
with an error $O(T^{-\epsilon_0})$, $\epsilon_0>0$, 
using a \textit{single} Dirichlet 
polynomial $\sum_{n<x} c_n(x)n^{-s}$, $|c_1(x)|>1/2$, $c_n(x)\ll n^{o(1)}$,
of length much shorter than the ``analytic conductor'' of $L(s)$.
Note that the analytic conductor terminology usually appears 
in connection with subconvexity estimates, but it also arises naturally in 
connection with algorithms for $L$-functions, such as those in \cite{R1}.
Following \cite{IK}, the analytic conductor of $\zeta(s)$ can be defined
as $|s|+3$, and the analytic conductor of $L(s,\chi)$ 
can be defined as $q (|s+\mathfrak{a}|+3)$, where 
$\mathfrak{a} := (1-\chi(-1))/2$. The precise definition 
is not important to the asymptotic results discussed here, and the reader
may refer to \cite{IK} for the definition for a general type of $L$-function. 

The result of Bombieri and Friedlander assures, for example, that
$L(\sigma+it,\chi)$, $\sigma<1/2$, $T\le t \le 2T$, 
can not  be approximated well by a single Dirichlet polynomial
of length $T^{1-o(1)}$.
However, as remarked in \cite{BF}, the behavior in a fixed strip to the right of the critical
line is quite different: On the Lindel\"of hypothesis, $L(s)$ can be approximated 
there with an error $o(1)$ using arbitrarily short Dirichlet polynomials. Therefore,
if we define $\tilde{\mu}_L(\sigma)$, $0\le \sigma \le 1$ say, as
the infimum of all the numbers $\eta$ such that $L(\sigma+it)$ 
can always be approximated in $T\le t \le 2T$ with an error $o(1)$ 
using a Dirichlet polynomial $\sum_{n<x} c_n(x) n^{-s}$
of length $x = O(T^{\eta})$, then it is clear that $\tilde{\mu}_L(\sigma)$
and $\mu_L(\sigma)$ are qualitatively different. 
For example, $\mu_L(\sigma)$ is continuous,
whereas $\tilde{\mu}_L(\sigma)$ is not (assuming the Lindel\"of hypothesis for $L(s)$).

If the restriction on the number of Dirichlet 
polynomials is removed, then one can do better. For example, it is possible 
to approximate many $L$-functions with an error $o(1)$ using two Dirichlet 
polynomials, each of length roughly the square-root of the analytic conductor. 
A general method for doing so is 
the smoothed approximate functional equation, which we will discuss
shortly in more detail. 
But, first, we remark that in all known algorithms where the square-root barrier is
broken, one had to ultimately abandon the framework of Dirichlet polynomials.
For instance, the algorithms in \cite{H2} rely on
approximations via low-degree exponential sums, and so 
does the algorithm of Theorem~\ref{mainthm}; see also \cite{S,V1,V2}. 

While Theorem~\ref{mainthm} never uses the smoothed approximate functional equation, 
it is still useful to discuss such a general method here since it supplies formulas
for computing many $L$-functions, including $L(s,\chi)$. 
Formula \eqref{eq:smoothed approx} below is actually a specialization 
of a smoothed approximate functional equation formula in \cite{R1}, 
 valid for a Dirichlet series with arbitrary coefficients provided the series possesses
a meromorphic continuation and a functional equation, and 
satisfies very mild growth conditions (so no Euler-product is required).
See \cite{R2} for a \verb!C++! implementation. 

Specifically, Rubinstein \cite{R1} provides two formulas for $L(s,\chi)$,
for the even and odd  cases,
which can be combined straightforwardly as follows:
Assume $\chi\bmod{q}$ is a primitive character, and let
$\mathfrak{a} := (1-\chi(-1))/2$, then

\begin{equation} \label{eq:smoothed approx}
\begin{split}
\left(\frac{q}{\pi}\right)^{\frac{s+\mathfrak{a}}{2}}\Gamma\left(\frac{s+\mathfrak{a}}{2}\right)L(s,\chi)\delta^{-s} &= \delta^{\mathfrak{a}} \sum_{n=1}^{\infty} \chi(n) G\left(\frac{s+\mathfrak{a}}{2},\frac{\pi n^2\delta^2}{q}\right) \\
&\quad + \frac{1}{\delta^{\mathfrak{a}+1}}\frac{\tau(\chi)}{i^{\mathfrak{a}} q^{1/2}}\sum_{n=1}^{\infty} \overline{\chi(n)} G\left(\frac{1-s+\mathfrak{a}}{2}, \frac{\pi n^2}{\delta^2 q}\right)\,,
\end{split}
\end{equation}
where $G$ is a smoothing function, expressed in terms of
the incomplete Gamma function $\Gamma(z,w)$,
\begin{equation}
G(z,w) := w^{-z}\Gamma(z,w) = \int_1^{\infty} e^{-wx}x^{z-1}\,dx\,,\qquad \Re(w) > 0\,,
\end{equation}
$\tau(\chi)$ is the usual Gauss sum, 
\begin{equation}
\tau(\chi) := \sum_{n=1}^q \chi(n)e^{2\pi i n/q}\,,
\end{equation}
and $\delta$ is a certain complex parameter, with a simple dependence on $s$, chosen to
cancel out the exponential decay in $\Gamma((s+\mathfrak{a})/2)$ as $|\Im(s)| \to \infty$; 
see~\cite{R1} for details. 
Although the series in \eqref{eq:smoothed approx} are infinite, the weights
$G(z,w)$ decay exponentially fast when $\Re(w) \gg 1$. For a given $\lambda$,
 the series \eqref{eq:smoothed approx} can be truncated after 
$q^{1/2+o_{\lambda}(1)}(|s|+1)^{1/2+o_{\lambda}(1)}$ terms
with a truncation error $\pm q^{-\lambda}(|s|+1)^{-\lambda}$.
Once truncated, the series can be evaluated term by term to give 
a numerical approximation of $L(s,\chi)$ accurate 
to within $\pm 2q^{-\lambda}(|s|+1)^{-\lambda}$ say. 
Therefore, the number of terms needed is roughly equal to the square-root of the 
analytic conductor of $L(s,\chi)$. 
Notice that formula \eqref{eq:smoothed approx} involves the evaluation of
the Gauss sum $\tau(\chi)$, which requires summing an additional $q$ terms when done in a straightforward way.
Also, a direct application of \eqref{eq:smoothed approx} 
requires computing roughly the first ``square-root of the analytic conductor''
Dirichlet coefficients.

In the case of $S_{\chi}(K)$, where $\chi$ is primitive, one can use the multiplicativity of $\chi$, 
together with a suitable choice of a smoothing function, to always (regardless of $K$) express $S_{\chi}(K)$
as a sum involving $q^{1/2 + o_{\lambda}(1)}$ terms multiplied by $\tau(\chi)$; 
see \textsection{\ref{sec:general sum}}.
If $K$ is smaller than $q^{1/2}$, however, then such a series 
does not lead to a faster computation 
since it is longer than the original sum.
In Theorem~\ref{lem:l2}, we provide a different method for computing $S_{\chi}(K)$, 
which leads to asymptotic speed-ups if $q$ is smooth enough
(and which, in turn, is the main ingredient in the proof of Theorem~\ref{mainthm}).

We sketch Theorem~\ref{lem:l2}. 
Let $\chi \bmod{q}$ be any character, where $q = p_1^{a_1}\cdots p_h^{a_h}$ 
($\chi$ need not be primitive). 
Theorem~\ref{lem:l2} assures that $S_{\chi}(K)$ can be computed to within $\pm \epsilon$
in about $p_1^{\lceil a_1/3\rceil}\cdots p_h^{\lceil a_h/3\rceil}$ time, 
up to a poly-log factor in $q$ and $1/\epsilon$. 
This running-time improves on $q^{1/2}$ for many choices of the $a_j$, or roughly when

\begin{eqnarray}\label{eq:time req}
\prod_{a_j\in\{1,2,4\}} p_j^{\lceil a_j/3\rceil - a_j/2} q^{\epsilon_1} \ll \prod_{a_j\not\in\{1,2,4\}} p_j^{a_j/2-\lceil a_j/3 \rceil} \,,
\end{eqnarray} 
for some $\epsilon_1 > 0$.
Notice that $\lceil a_j/3\rceil - a_j/2 = 1/2$ if $a_j=1$, and it vanishes if $a_j\in\{2,4\}$,
so the l.h.s. is simply $\prod_{a_j=1} \sqrt{p_j}$, which is the square-root of the square-free part of $q$.
The behavior of the algorithm of Theorem~\ref{lem:l2} as $q\to \infty$ is very well-controlled, in the sense that power-savings are
obtained regardless of whether $q\to \infty$ through some of the $a_j$'s or some of 
the $p_j$'s or any combination thereof.
For example, if $q=p^{3a}$, then the method requires about $p^a=q^{1/3}$ time (even if $p=5$ say). 
As another example, if $q=p_1 p_2^{3a}$, then the time requirement is about 
$p_1 p_2^a$, which represents a power-saving beyond $q^{1/2}$ when $p_2^a \gg p_1 q^{\epsilon_2}$
for some $\epsilon_2>0$.
Roughly speaking, if the $a_j$ are large, or if the $p_j$ are large but with exponents nearly divisible by 3, then 
the running time is about $q^{1/3+o(1)}$.

Since our methods exploit the power-full structure of the
modulus (via the Postnikov character formula), 
it is not surprising that $a_j=1$, which corresponds to the prime modulus case, 
appears as an exceptional case 
in \eqref{eq:time req}, meaning it is a case where we do not improve on $q^{1/2}$. 
But the appearance of $a_j=2$ and $a_j=4$ as exceptional cases 
is somewhat surprising. 
The reason we do not obtain a power-saving beyond $q^{1/2}$ when $a_j=4$, for example, is because we encounter 
 cubic exponential sums with possibly large cubic coefficients. 
 There is no general algorithm to 
compute such sums faster than required by a straightforward evaluation 
except for the algorithm of~\cite{H2}, which is suitable for sums with small cubic coefficients.

We illustrate the basic idea of Theorem~\ref{lem:l2} in the situation $q=p^a$
and simplifying to a character sum.
For further simplicity, assume $K$ is a multiple of $p^{\lceil a/3\rceil}$. So,
\begin{eqnarray}\label{eq:sketch}
S_{\chi}(K)=\sum_{\substack{0<l<p^{\lceil a/3\rceil}\\ (l,p)=1}} \chi(l) \sum_{0\le k<K/p^{\lceil a/3 \rceil}} \chi(1+ k\overline{l} p^{\lceil a/3\rceil})\,,
\end{eqnarray}  
where $(m,n)$ denotes the greatest common divisor of $m$ and $n$, and $\overline{l}$ 
is determined by the relation $l \overline{l} \equiv 1 \bmod{p^a}$. 
The particular choice of the exponent $\lceil a/3\rceil$ in \eqref{eq:sketch} 
is so that the inner sum there can be expressed as a quadratic exponential sum 
via a specialization of the Postnikov character formula in lemma~\ref{lem:l1}.
Once expressed this way, the inner 
sums can be computed to within $\pm \epsilon$ 
in poly-log time (in $q$ and $1/\epsilon$) 
using the algorithm in \cite[Theorem 1.1]{H1}; see Theorem~\ref{thetaalg} here.
If the exponent $\lceil a/3\rceil$ in \eqref{eq:sketch} is 
decreased any further, then, in general, the Postnikov 
character formula yields cubic and higher degree exponential sums. 
In Theorem~\ref{lem:l2}, this idea is generalized to quadratic sums twisted by $\chi$. 

\section{Computing $S_{\chi}(K)$}\label{sec:compS}

Let $q = p_1^{a_1}\cdots p_h^{a_h}$.
(We assume the prime factorization of $q$ is given to us, but 
factoring $q$ does not cause a complexity
issue in any case; for example, Lehman's method, see \cite{CP}, can be used to factor $q$
in $q^{1/3+o(1)}$ time, and with no storage space requirement.)
Let $\chi$ be a character $\textrm{mod}\,\,q$.
We first discuss how $\chi$ should be ``given'' as an input to the algorithms. 
To facilitate computation, the following way is convenient. 

Recall that every character $\chi \bmod{q}$ 
can be expressed as a product of characters $\chi_j \bmod{p_j^{a_j}}$, $1\le j\le h$.
Assume, at first, that all the $p_j$'s are odd, or if $p_j=2$
for some $j$ then $a_j < 3$. Then the theory of primitive roots applies,
and we require $\chi$ to be presented to the algorithm as an $h$-tuple
of roots of unity $(\omega_1,\ldots,\omega_h)$ whose entries satisfy
$\omega_j^{m_j}=1$, where $m_j:=\phi(p_j^{a_j}) = p_j^{a_j-1}(p_j-1)$,
together with an $h$-tuple of primitive roots $(g_1\bmod{p_1^{a_1}},\ldots,g_h\bmod{p_h^{a_h}})$.
Given such tuples, the algorithm 
defines $\chi_j(g_j) := \omega_j$, for $1\le j\le h$, which determines $\chi$ uniquely.
If $p_j=2$ and $a_j \ge 3$ for some $j$ (so $\chi_j$ is a 
character $\textrm{mod}\,\,2^{a_j}$), then the entries corresponding to $p_j$  
in the above tuples are omitted, and we require an additional 2-tuple
$(\omega_1',\omega_2')$  whose entries satisfy $(\omega_1')^2=1$ and $(\omega_2')^{2^{a-2}}=1$.
The reason for this modification 
is that, taking $a=a_j$, the group $\left(\mathbb{Z}/2^a\mathbb{Z}\right)^{*}$ is 
not cyclic if $a\ge 3$, and so there is no primitive root. Therefore, we rely on the
well-known group decomposition of $\left(\mathbb{Z}/2^a\mathbb{Z}\right)^{*}$ 
to express the odd residue classes in the form $(-1)^{v_1} 5^{v_2}\bmod{2^a}$, 
where $v_1$ and $v_2$ are integers that are 
uniquely determined modulo $2$ and modulo $2^{a-2}$, respectively.
Last, given the 2-tuple $(\omega_1',\omega_2')$, the algorithms define $\chi_j(-1) := \omega_1'$ and 
$\chi_j(5) := \omega_2'$.

With $\chi$ thus presented, 
we supply a fast procedure for computing $\chi$ at individual points. 
Note that, in general, the problem of determining the value of $\chi \bmod{q}$ at an individual point 
is a hard discrete log problem. There are known sub-exponential time algorithms for solving it but their 
running times are only conjectural: see~\cite{O1} for a survey of such 
  algorithms. Fortunately, in our case, we can exploit
  the power-full structure of the modulus, which makes 
  the problem computationally simple.
\begin{lem}\label{lem:compC}
There are absolute constants $\kappa_4$, $\kappa_5$, and $\kappa_6$ such
that for any positive integer $q=p_1^{a_1}\cdots p_h^{a_h}$ (where $p_j$ are distinct primes), 
any given Dirichlet character $\chi\bmod q$, 
 any positive $\epsilon < e^{-1}$, and any integer $0\le c < q$,
the value of $\chi(c)$ can be computed to within
$\pm \epsilon$ using $O(\log^{\kappa_4}(q/\epsilon))$ operations on
numbers of $O(\log (q/\epsilon))$ bits, provided a precomputation, that depends on $q$ only, costing
$O((p_1+\cdots +p_h)\, \log^{\kappa_5} q)$ operations, and 
requiring $O((p_1+\cdots +p_h)\,\log^{\kappa_6} q)$ bits of storage, is performed.
Big-$O$ constants are absolute.
\end{lem}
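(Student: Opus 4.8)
The plan is to reduce the evaluation of $\chi$ to a short chain of operations in the rings $\mathbb{Z}/p_j^{a_j}\mathbb{Z}$, exploiting crucially that each modulus is a prime power. Write $\chi=\prod_{j=1}^{h}\chi_j$ with $\chi_j\bmod p_j^{a_j}$, so $h=O(\log q)$. Given an integer $0\le c<q$, first compute $\gcd(c,q)$ by the Euclidean algorithm; if it exceeds $1$ (or if $c=0$) output $0$. Otherwise reduce $c$ modulo each $p_j^{a_j}$ to obtain units $c_j$, evaluate each $\chi_j(c_j)$ as described below, and return the product. Since $h=O(\log q)$, it suffices to evaluate a single $\chi_j(c_j)$ to within $\pm\epsilon$ in poly-log time (in $q$ and $1/\epsilon$), given a precomputation of size $O(p_j)$, up to poly-log factors in $q$, that depends on $p_j^{a_j}$ only.

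Suppose first that $p_j$ is odd (the case $p_j=2$, $a_j<3$ being identical). Then $(\mathbb{Z}/p_j^{a_j}\mathbb{Z})^{*}$ is cyclic of order $m_j=p_j^{a_j-1}(p_j-1)$, generated by the given primitive root $g_j$, and $\chi_j(c_j)=\omega_j^{\,k}$, where $g_j^{\,k}\equiv c_j\pmod{p_j^{a_j}}$; thus we must determine $k\bmod m_j$. Since $\gcd(p_j^{a_j-1},p_j-1)=1$, this splits by the Chinese Remainder Theorem into finding $k\bmod(p_j-1)$ and $k\bmod p_j^{a_j-1}$. For the first (the ``tame'' part), reducing modulo $p_j$ gives $c_j\equiv g_j^{\,k}\pmod{p_j}$, so $k\bmod(p_j-1)$ is a discrete logarithm in $(\mathbb{Z}/p_j\mathbb{Z})^{*}$, which we read off in a single lookup from a precomputed table $L_j$ of length $p_j$ recording $(g_j\bmod p_j)^{i}\mapsto i$ for $0\le i\le p_j-2$; this table is built in $O(p_j)$ ring operations and occupies $O(p_j\log q)$ bits. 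For the second (the ``wild'', power-full part), raise the congruence to the $(p_j-1)$st power: the element $b_j:=g_j^{\,p_j-1}\bmod p_j^{a_j}$ is a principal unit generating the subgroup of order $p_j^{a_j-1}$, and $c_j^{\,p_j-1}\equiv b_j^{\,k}\pmod{p_j^{a_j}}$ determines $k\bmod p_j^{a_j-1}$. This last discrete logarithm is \emph{cheap}, precisely because the modulus is a prime power: working in $\mathbb{Z}/p_j^{a_j}\mathbb{Z}$ with the $p_j$-adic logarithm, write $\log(b_j)=p_j u$ and $\log(c_j^{\,p_j-1})=p_j v$ with $u,v\in\mathbb{Z}/p_j^{a_j-1}\mathbb{Z}$ and $u$ invertible; then $b_j^{\,k}\equiv c_j^{\,p_j-1}$ gives $ku\equiv v\pmod{p_j^{a_j-1}}$, so $k\equiv v\,u^{-1}\pmod{p_j^{a_j-1}}$, the inverse obtained by the Euclidean algorithm. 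Combining the two residues by the Chinese Remainder Theorem yields $k\bmod m_j$, and we output $\omega_j^{\,k}$.

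When $p_j=2$ and $a_j\ge3$, we use instead the decomposition $(\mathbb{Z}/2^{a_j}\mathbb{Z})^{*}=\langle-1\rangle\times\langle5\rangle$: read $v_1\in\{0,1\}$ off $c_j\bmod4$, then apply the $2$-adic logarithm to the principal unit $(-1)^{-v_1}c_j$ (which is $\equiv1\bmod4$) and to $5$, exactly as above, to get $v_2\bmod 2^{a_j-2}$ with $(-1)^{v_1}5^{v_2}\equiv c_j\pmod{2^{a_j}}$, and output $(\omega_1')^{v_1}(\omega_2')^{v_2}$; no table of size $2$ is needed here. Two routine points underlie the above. First, the truncated series $\log(1+p_jt)=\sum_{n\ge1}(-1)^{n-1}(p_jt)^n/n$ needs only poly-log-many terms to be exact modulo $p_j^{a_j}$, since $v_{p_j}((p_jt)^n/n)\ge n-v_{p_j}(n)\to\infty$; and each term is evaluated with a few guard $p_j$-adic digits so that the division by $n$ is realized as an exact shift followed by a unit inversion, costing poly-log per term. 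Second, each exponentiation $\omega^{\,k}$ of a root of unity, $k<q$, is performed by repeated squaring in $O(\log q)$ complex multiplications carried out with $O(\log(q/\epsilon))$ bits, which keeps the accumulated round-off below $\epsilon$ (alternatively, if $\omega=e^{2\pi i\theta}$ one reduces $k\theta\bmod1$ and takes a single sine and cosine).

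Putting this together: the per-call work is $h$ gcds and reductions plus, for each $j$, one table lookup, one or two $p_j$-adic logarithm evaluations, a Chinese-Remainder step, and an exponentiation — each of poly-log cost in $q$ and $1/\epsilon$ on numbers of $O(\log(q/\epsilon))$ bits — giving the claimed $O(\log^{\kappa_4}(q/\epsilon))$ bound. The precomputation is dominated by building and storing the tables $L_j$; the remaining per-$j$ one-time work (computing $b_j$ and $u^{-1}$ modulo $p_j^{a_j-1}$, the Chinese-Remainder coefficients, $\arg\omega_j$, etc.) is poly-log. Hence the precomputation costs $O((p_1+\cdots+p_h)\log^{\kappa_5}q)$ operations on $O(\log q)$-bit numbers and occupies $O((p_1+\cdots+p_h)\log^{\kappa_6}q)$ bits, as claimed. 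The one point requiring genuine care is the first routine point above: verifying that the $p$-adic logarithm can be computed \emph{exactly} in $\mathbb{Z}/p^{a}\mathbb{Z}$ with a provable poly-log operation count, in particular that the denominators $n$ divisible by $p$ are cleared correctly — this is exactly where the power-full modulus converts an otherwise hard discrete-logarithm problem into a trivial one. Everything else is bookkeeping with the Chinese Remainder Theorem, the structure of $(\mathbb{Z}/2^{a}\mathbb{Z})^{*}$, and round-off control.
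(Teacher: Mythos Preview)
Your argument is correct, and for the tame part (finding $k\bmod(p_j-1)$) and the case $p_j=2$ it matches the paper's proof essentially verbatim. The genuine difference is in the wild part, finding $k\bmod p_j^{a_j-1}$. The paper does this by a Pohlig--Hellman style recursion due to Pomerance: it precomputes a second table of length $p_j$, namely the powers of $g':=g_j^{(p_j-1)p_j^{a_j-2}}\bmod p_j^{a_j}$, and then recovers the base-$p_j$ digits of $k$ one at a time via $a_j-1$ table lookups, each preceded by a fast exponentiation. Your route instead invokes the $p$-adic logarithm on the principal units, turning the discrete log into a single linear congruence $ku\equiv v\pmod{p_j^{a_j-1}}$; this is arguably cleaner, needs only the one tame table, and makes the power-full structure of the modulus do the work directly through the convergence of the series $\sum(-1)^{n-1}(p_jt)^n/n$. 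The trade-off is that the paper's approach stays entirely within exact integer arithmetic modulo $p_j^{a_j}$ and avoids any analysis of $p$-adic valuations or guard digits, whereas your approach requires the short verification (which you sketch) that the truncated series with a bounded number of extra $p_j$-adic digits yields $\log$ exactly modulo $p_j^{a_j}$. Both land squarely within the stated complexity bounds.
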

\begin{proof}
It suffices to show how to compute each character $\chi_j\bmod{p_j^{a_j}}$
occurring in the decomposition $\chi=\chi_1\cdots\chi_h$. This is because 
there are only $h \ll \log q$ such characters,
and so the cost of computing $\chi(c)$ is the same as $\chi_j(c)$ except 
for an additional multiplicative factor of $\log q$, 
which falls within the target complexity of the lemma. 
In turn, to compute $\chi_j(c)$, it suffices to solve the discrete 
log problem $g_j^x \equiv c \bmod{p_j^{a_j}}$, because then $\chi_j(c)$ 
can be computed via the formula $\chi_j(c)=\omega_j^x$, which is
fast since $\omega_j$ is supplied to the algorithm via the presentation of $\chi$. So the difficult part
of computing $\chi_j(c)$ is to solve for $x$, which we do next.
(If $p$ divides $c$
then $\chi_j(c) = 0$; as this condition can be checked quickly by a single division,
 we may assume that $\gcd(p_j,c)=1$ from now on.)

Let us first deal with the odd $p_j$ case. 
Recall that $g_j$ is the primitive root associated with $\chi_j \bmod{p_j^{a_j}}$, 
and is supplied to algorithm via the presentation of $\chi$.
To avoid notational clutter, let $p=p_j$, $a=a_j$, and $g=g_j$.
In order to solve $g^x \equiv c \bmod{p^a}$, it suffices to find integers 
$l_1$ and $l_2$ such that $(g^{p-1})^{l_1} \equiv c^{p-1} \bmod{p^a}$ and 
$(g^{p^{a-1}})^{l_2} \equiv c^{p^{a-1}}\bmod{p^a}$. This is 
because, given $l_1$ and $l_2$, one can find integers $r$ and $s$ 
via the Euclidean algorithm (which is computationally fast)  such that $r(p-1)+sp^{a-1}=1$, 
and so $x = r (p-1)l_1 + s p^{a-1}l_2$ is a solution. Therefore, the discrete
log problem $\textrm{mod}\,\,p^a$ can be reduced to two discrete log problems
in the (cyclic) subgroups of $\left(\mathbb{Z}/p^a\mathbb{Z}\right)^{*}$ 
of order $p^{a-1}$ and $p-1$ (which are generated by $g^{p-1}$ and $g^{p^{a-1}}$, respectively).
Furthermore, the problem in the subgroup of order $p^{a-1}$
can be reduced to $a-1$ discrete log problems in the subgroup of order $p$ 
using a straightforward recursive procedure described~\cite{Pom} 
by Pomerance. For the convenience of the reader, let us
sketch that procedure here. We may assume $a\ge 2$, otherwise
the problem is either trivial or is already in the subgroup of order $p$.
We want to solve $(g^{p-1})^{l_1} \equiv c^{p-1}\bmod{p^a}$.
Since $l_1$ can be expressed in the form
$l_1=b_0+\cdots+b_{a-2}\,p^{a-2}$, $0\le b_r < p$, 
it suffices to determine the integers $b_r$
(which are the base-$p$ digits of $l_1$). 
To this end, suppose $b_0,\ldots,b_{r-1}$ are known, and 
let 
\begin{equation}
\begin{split}
\alpha_r &\equiv c^{p-1} (g^{p-1})^{-b_0-\cdots-b_{r-1}p^{r-1}} \bmod{p^a} \\ 
& \equiv g^{(p-1)(b_rp^r+\cdots+b_{a-2}p^{a-2})}\bmod{p^a}\,.
\end{split}
\end{equation}
Then, visibly, $\alpha_r$ is $(p-1)p^r$--power. So, letting $\beta_r \equiv \alpha_r^{p^{a-r-2}} \bmod{p^a}$,
we deduce that $\beta_r$ is a $(p-1)p^{a-2}$--power, and thus $\beta_r$ is in the subgroup of
order $p$, which is generated by $g'\equiv g^{(p-1)p^{a-2}} \bmod{p^a}$. Therefore, the solution of 
$(g')^x \equiv \beta_r \bmod{p^a}$, which is a discrete-log problem in the subgroup of order $p$, 
satisfies $x \equiv b_r\bmod{p}$, because, by definition, $g^{(p-1)p^{a-1}y}\equiv 1\bmod{p^a}$ for any $y\in\mathbb{Z}$,
and so
\begin{equation}
\begin{split}
\beta_r & \equiv \alpha_r^{p^{a-r-2}} \equiv g^{(p-1)(b_rp^r+\cdots+b_{a-2}p^{a-2}) p^{a-r-2}} \bmod{p^a}\\
&\equiv (g^{(p-1)p^{a-2}})^{b_r} \equiv (g')^{b_r} \bmod{p^a}\,,
\end{split}
\end{equation}
Moreover, $x$ determines $b_r$ uniquely since, by hypothesis, $0\le b_r < p$. 
As for $b_0$, which is needed to initialize the procedure,  
it is found simply by solving  $(g')^x \equiv c^{(p-1)p^{a-2}}\bmod{p^a}$, which is again  
a discrete log problem in the subgroup of order $p$ (to which $c^{(p-1)p^{a-2}}\bmod{p^a}$ belongs). We note that 
quantities like $\alpha_r$, $\beta_r$, $g'$, and $c^{(p-1)p^{a-2}}\bmod{p^a}$ can always be computed
using repeated squaring $\bmod{\,\,p^a}$, which is fast.

In summary, to find $l_1$  it suffices to work with the generator 
$g'$ of the subgroup of order $p$.
One then tabulates its powers: $g',\ldots,(g')^p \bmod{p^a}$, by sequentially
 multiplying by $g'$ modulo $p^a$. It is important that this precomputation does not 
 depend on $c$, but only on $p^a$ (it is even independent of the character), 
 and so the table need not be created anew for
 each different $c$.
The cost of creating the table is about $p$ operations and $p$ space
(up to a poly-log factor in $p^a$). Once done, the value of $l_1$  
can be determined from the precomputed values using $a-1$ repetitions of  
procedure we have described, where each repetition involves a table look-up, 
which can be done in poly-log time in $p^a$, and hence in $q$ 
(assuming a random access memory model).

As for $l_2$, it suffices to work with the generator 
$g'' \equiv g^{p^{a-1}}\bmod p^a$ of the subgroup of order $p-1$.
As before, one tabulates its powers $g'',\ldots,(g'')^{p-1} \bmod{p^a}$,
so then $l_2$ can be determined by a direct table look-up. The overall cost
of this is again about $p$ time and $p$ space.

It remains to show how to compute individual values of 
$\chi_j \bmod{2^a}$, where $a\ge 3$. 
(The cases $a\in\{1,2\}$ do not represent any computational difficulty.) 
The main task here is to solve for $v_1$ and $v_2$ such that 
$(-1)^{v_1} 5^{v_2} \equiv c \bmod{2^a}$.
The index $v_1$ is simple to compute: it is either 
0 or 1 according to whether $c$ is
1 or $-1$ modulo 4. As for $v_2$, it can be computed via
a recursive procedure similar to the case of odd $p$
(one works in the cyclic subgroup generated by 5, which has order $2^{a-2}$).
Last, $\chi_j(c) = (\omega_1')^{v_1} (\omega_2')^{v_2}$.
\end{proof}

The next needed ingredient is the Postnikov character formula,
which was derived by Postnikov to obtain upper bounds on character sums. 
It was later re-proved by Gallagher~\cite{Ga}.
(See \cite{I} and \cite{IK} for other formulations.)  
The formula shows that the values of a Dirichlet character $\tilde{\chi} \bmod{p^a}$ 
along the arithmetic progression \mbox{$\{1,1+p^b,1+2p^b,1+3p^b,\ldots\}$}, with
$b\ge 1$,
are of the form $\tilde{\chi}(1+p^bx)=\exp(2\pi i f(x))$, where $f(x)$ is a polynomial 
with rational coefficients that depends on $\tilde{\chi}$, $p$, $a$, and $b$ only. 
Lemma~\ref{lem:l1} below, which is a specialization of lemma~2 in \cite{Ga},
 shows that $b$ can be arranged so that  $f(x)$ is of degree at most 2. 
\begin{lem}\label{lem:l1}
Let $\tilde{\chi}\bmod{p^a}$ be a Dirichlet character,
and let $b:=\lceil a/3\rceil$. 
If $p$ is an odd prime, then there exists an integer $L$, depending on
$\tilde{\chi}$, $p$, $a$, and $b$ only (so independent of $x$),  such that
\begin{equation}
\tilde{\chi}(1+p^b x)=\exp\left(\frac{4\pi i \,L\, x}{p^{a-b}}- \frac{2\pi i \,L\, x^2}{p^{a-2b}}\right)\,, 
\end{equation}
for all $x\in \mathbb{Z}$. If $p=2$ and $a > 3$, then there exists an integer $L_1$, depending on 
$\tilde{\chi}$ and $b$ only (so independent of $x$), such that
\begin{displaymath}
\tilde{\chi}(1+2^bx) = \exp\left(\frac{2\pi i \,L_1\, x}{2^{a-b}}- \frac{\pi i \,L_1\, x^2}{2^{a-2b}}\right)\,,
\end{displaymath}
for all $x\in\mathbb{Z}$. And if $p=2$ and $a\le 3$, then there exist
absolute constants $-1 \le L_2,L_3 \le 2$ such that  
$\tilde{\chi}(1+2^bx) = (-1)^{(L_2 x + L_3 x^2)/2}$ for all $x \in\mathbb{Z}$. 
\end{lem}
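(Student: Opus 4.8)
The plan is to describe $\tilde{\chi}$ explicitly on the progression $\{\,1+p^bx : x\in\mathbb{Z}\,\}$, which lies entirely in $(\mathbb{Z}/p^a\mathbb{Z})^{*}$ since each element is $\equiv 1 \pmod p$. The one idea that matters is the choice $b=\lceil a/3\rceil$: it forces $3b\ge a$, so in the binomial (or $p$-adic logarithm) expansions below every term of order $\ge 3$ carries a factor $p^{3b}$ and hence vanishes modulo $p^a$, leaving only a quadratic. One could also simply quote lemma~2 of \cite{Ga} and specialize $b=\lceil a/3\rceil$, but I will sketch a self-contained argument.

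For $p$ odd I would first establish the elementary congruence
\begin{equation*}
1 + p^b x \;\equiv\; (1+p^b)^{x}\,(1+p^{2b})^{-\binom{x}{2}} \pmod{p^a}, \qquad x\in\mathbb{Z},
\end{equation*}
by multiplying out both factors with the binomial theorem and dropping every term divisible by $p^{3b}$; since the $\binom{x}{k}$ and $\binom{-\binom{x}{2}}{k}$ are integers, there is no denominator to worry about. Applying $\tilde{\chi}$ and using complete multiplicativity gives $\tilde{\chi}(1+p^bx)=\tilde{\chi}(1+p^b)^{x}\,\tilde{\chi}(1+p^{2b})^{-\binom{x}{2}}$. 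Next I would record the relation $\tilde{\chi}(1+p^{2b})=\tilde{\chi}(1+p^b)^{p^b}$, which follows from $(1+p^b)^{p^b}\equiv 1+p^{2b}\pmod{p^a}$; this step genuinely uses $p$ odd, via the valuation bound $v_p\!\bigl(\binom{p^b}{k}\bigr)\ge b-v_p(k)$, which makes every term past the first divisible by $p^{3b}$. Therefore $\tilde{\chi}(1+p^bx)=\tilde{\chi}(1+p^b)^{\,x-p^b\binom{x}{2}}$, with $x-p^b\binom{x}{2}$ an integer-valued quadratic in $x$ (note $x(x-1)$ is even). Finally, because $2-p^b$ is coprime to $p$ I may write $\tilde{\chi}(1+p^b)=\exp\!\bigl(2\pi iL(2-p^b)/p^{a-b}\bigr)$ for a suitable integer $L$; a one-line computation then shows that the resulting exponent $\tfrac{2\pi iL(2-p^b)(x-p^b\binom{x}{2})}{p^{a-b}}$ differs from $\tfrac{4\pi iLx}{p^{a-b}}-\tfrac{2\pi iLx^2}{p^{a-2b}}$ precisely by $2\pi iL\binom{x}{2}p^{3b-a}$, an element of $2\pi i\mathbb{Z}$ since $3b-a\ge 0$. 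This proves the odd-$p$ case; when $a\in\{1,2\}$ one has $2b\ge a$, the $x^2$-term is vacuous, and the same identity holds trivially.

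For $p=2$ with $a>3$ the strategy is the same, but since $(\mathbb{Z}/2^a\mathbb{Z})^{*}$ is not cyclic one works inside the cyclic subgroup $1+4\mathbb{Z}/2^a\mathbb{Z}=\langle 5\rangle$ of order $2^{a-2}$; it contains every $1+2^bx$ because $b=\lceil a/3\rceil\ge 2$. Writing $1+2^bx\equiv 5^{\,y(x)}\pmod{2^a}$ and using that the $2$-adic logarithm is injective on $1+4\mathbb{Z}_2$, one gets $y(x)\equiv \log_2(1+2^bx)/\log_2 5\pmod{2^{a-2}}$; truncating $\log_2(1+2^bx)=2^bx-2^{2b-1}x^2+\cdots$ after the quadratic term — the tail is divisible by $2^{3b}$, hence by $2^a$ — shows that $y(x)$ agrees modulo $2^{a-2}$ with a quadratic polynomial in $x$ having integer coefficients, so that $\tilde{\chi}(1+2^bx)=\tilde{\chi}(5)^{y(x)}$, and clearing denominators (using $v_2(\log_2 5)=2$) puts this in the stated form with one integer $L_1$. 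When $a\le 3$ we have $b=1$, the element $1+2x$ modulo $2^a$ runs over at most four values, and $(\mathbb{Z}/2^a\mathbb{Z})^{*}$ is a product of groups of order $\le 2$, so a direct finite check produces constants $-1\le L_2,L_3\le 2$ realizing the claimed formula.

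All the conceptual content sits in the inequality $3b\ge a$. The part that needs care is matching the exponent to the exact normalization in the statement — the numerators $4L$ and $-2L$, and for $p=2$ the factor $\pi i$ together with the precise powers $2^{a-b}$, $2^{a-2b}$ — which is the bookkeeping indicated above, plus the handful of degenerate small-$a$ cases. I expect tracking the $p=2$ constants, i.e.\ relating $\tilde{\chi}(5)$, $\log_2 5$, and the exponent of $2$ in $b$, to be the fussiest (though still routine) point.
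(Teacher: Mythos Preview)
Your argument is correct and reaches the same conclusion, but by a genuinely different route than the paper's. The paper follows Gallagher: it defines $\psi(1+p^bx):=\exp(2\pi i f(p^bx)/p^a)$ with $f(x)=2x-x^2$, verifies from the algebraic identity $f(p^bx+p^by+p^{2b}xy)\equiv f(p^bx)+f(p^by)\pmod{p^a}$ (this is where $3b\ge a$ enters) that $\psi$ is a character of the kernel $H$ of $(\mathbb{Z}/p^a\mathbb{Z})^*\to(\mathbb{Z}/p^b\mathbb{Z})^*$, checks that $\psi$ has order $|H|=p^{a-b}$ and hence generates $\widehat{H}$, and then simply takes $L$ to be the exponent with $\tilde{\chi}|_H=\psi^L$. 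The stated formula drops out with no further manipulation, and the congruence $L(2-p^b)\equiv B\pmod{p^{a-b}}$ for computing $L$ is an immediate byproduct. For $p=2$, $a>3$ the paper repeats this with $f_1(x)=x-x^2/2$, and for $a\le 3$ it lists the characters explicitly.

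Your approach is instead a direct multiplicative computation: you write $1+p^bx$ as a product of powers of $1+p^b$ via the truncated binomial (equivalently, the $p$-adic logarithm), obtaining $\tilde{\chi}(1+p^bx)=\tilde{\chi}(1+p^b)^{x-p^b\binom{x}{2}}$, and then normalize $\tilde{\chi}(1+p^b)$ at the end. This is more elementary in that it bypasses the character-group argument and makes the link to $\log_p$ transparent; on the other hand, the paper's construction delivers the exponent in its final form immediately and avoids the bookkeeping step you flag as fussy (matching the $4L$, $-2L$, and the $p=2$ powers). Both proofs rest on precisely the same inequality $3b\ge a$, and your treatment of the small-$a$ and $p=2$ cases is the same in spirit as the paper's.
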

\noindent
\textit{Remark}. We distinguish the conclusion of the lemma 
for odd $p$ when  $a\in\{1,2,4\}$, where we have 
$\tilde{\chi}(1+p^b k)=e^{2\pi i L k/p^{a-b}}$ if $a\in\{2,4\}$, and 
$\tilde{\chi}(1+p^b k)=1$ if $a=1$, which is trivial.
\begin{proof}
Let $H$ be the kernel of the reduction homomorphism 
$\left(\mathbb{Z}/p^a\mathbb{Z}\right)^* \to \left(\mathbb{Z}/p^b\mathbb{Z}\right)^*$.
So, $H$ is a subgroup in $\left(\mathbb{Z}/p^a\mathbb{Z}\right)^*$
 consisting of the residue classes congruent to $1\bmod{p^b}$, and 
$H$ has size $|H|=p^{a-b}$.
Using our model for $\mathbb{Z}/p^a\mathbb{Z}$, 
the elements of $H$ are identified with the set of integers 
$\{1+p^b x\, |\, 0\le x <p^{a-b}\}$.

Assuming $p>2$, we construct a character 
$\psi$ of $H$ which generates the full character 
group of $H$, including the character $\left.\tilde{\chi}\right|_H$,
such that $\psi$ is given explicitly by a quadratic exponential. 
To this end, define the polynomial $f(x):=2x-x^2$. Then, for all $x,y\in \mathbb{Z}$, we have
\begin{equation} \label{eq:re1}
\begin{split}
f(p^b x+p^b y+p^{2b} x \,y) &= 2p^b x+2p^b y+2p^{2b} x\,y-p^{2b} x^2-p^{2b} y^2 \\
&\quad -2p^{2b} x\, y-2 p^{3b} x^2\,y -2p^{3b} x\,y^2-p^{4b} x^2 y^2 \\ 
&\equiv f(p^b x)+f(p^b y) \bmod{p^a}\,,
\end{split}
\end{equation}
where we made use of the relation $p^{3b} \equiv 0 \bmod{p^a}$, 
which holds due to our choice $b=\lceil a/3\rceil$. 
Consider the following function $\psi: H\to \mathbb{C}$ defined by
\begin{eqnarray}
\psi(1+p^b x):=\exp\left(\frac{2\pi i f(p^b x)}{p^a}\right)=\exp\left(\frac{4\pi i\, x}{p^{a-b}}-\frac{2\pi i\, x^2}{p^{a-2b}}\right)\,.\nonumber
\end{eqnarray}
Notice this definition is independent of the model for $\mathbb{Z}/p^a\mathbb{Z}$ as it 
yields the same result if $1+p^bx$ is replaced by $1+p^bx + p^ak$ for all $k\in \mathbb{Z}$.
Now, by the congruence relation (\ref{eq:re1}), we have
\begin{eqnarray} \label{eq:re2}
\psi((1+p^b x)\,(1+p^b y))=\psi(1+p^b x)\,\psi(1+p^b y)\,,
\end{eqnarray}
and this equality holds for all $x,y,\in\mathbb{Z}$.
Therefore, $\psi$ is multiplicative ($\psi$ respects the 
group operation in $H$). Moreover, $\psi$ is not identically zero, because $\psi(1) = 1$. 
Hence, $\psi$  must be a character of $H$. By a direct calculation, 
$f(p^b) \equiv 0 \bmod{p^b}$, or, equivalently, $f(p^b)/p^b$ is an integer. 
Moreover, since $b>0$ and $p>2$, we have $f(p^b) = 2p^b - p^{2b} \not\equiv 0 \bmod{p^{b+1}}$,
and so $f(p^b)/p^b$ is relatively prime to $p$.
This immediately implies that the values $\psi(1+p^b)^u = e^{2\pi i u (f(p^b)/p^b)/p^{a-b}}$, 
$0\le u < p^{a-b}$, are all distinct. 
In particular, $\psi$ has order $p^{a-b}$, which is the same as the order of $H$. 
Therefore, the powers of $\psi$ span the full character group of $H$. 

But $\left.\tilde{\chi}\right|_H$ is a character of $H$. 
Hence, there is an integer $L$ 
such that $\left. \tilde{\chi}\right|_{H} \equiv \psi^L$. To find $L$,
note that $\tilde{\chi}(1+p^b)=\exp(2\pi i B /p^{a-b})$ 
for some integer $B$ depending on $\tilde{\chi}$, $p$, and $b$ only.
(In our application, $B$ can be determined quickly using lemma~\ref{lem:compC}.) So,
$L$ can be computed by simply solving the congruence $L\,f(p^b)/p^b \equiv B \bmod{p^{a-b}}$, which yields 
\begin{equation}\label{eq:Lcong}
L \equiv B\,\overline{2-p^b}\,\bmod{p^{a-b}}\,.
\end{equation}

It remains to consider the case when the modulus is $2^a$.
If $a> 3$, then the same derivation as in the odd prime case 
applies except one uses the polynomial $f_1(x) = x - x^2/2$
instead of $f(x)$, which gives
\begin{equation}\label{eq:Lcong1}
L_1 \equiv B\,\overline{1-2^{b-1}}\,\bmod{2^{a-b}}\,.
\end{equation}
(Notice $f_1(x)$ consists of the first two terms in the Taylor expansion of $\log(1+x)$). 
If $a\le 3$, then the previous proof does not go through because $b=1$ and so 
the condition $f_1(p^b) \not\equiv 0 \bmod{p^{b+1}}$ fails.
Nevertheless, if $a\le 3$, then we can find integers $L_2$ and $L_3$ such that 
$\tilde{\chi}(1+2^bx) = (-1)^{(L_2 x + L_3 x^2)/2}$.
Specifically, if $\tilde{\chi}$ is the principal character,
which is the sole character if $a=1$, then take $L_2=L_3=0$. 
So, we may assume $\tilde{\chi}$ is not principal. 
If $a=2$, then there is a single non-principal character, for which we take $L_2=2,\, L_3=0$. And
if $a=3$, then take $L_2=1,\,L_3=-1$ or $L_2=2,\,L_3=0$, 
or $L_2=1,\,L_3=1$, according to whether $\tilde{\chi}(3)=1$ and $\tilde{\chi}(5)=-1$, 
or $\tilde{\chi}(3)=-1$ and $\tilde{\chi}(5)=1$, or $\tilde{\chi}(3)=-1$ and $\tilde{\chi}(5)=-1$,
respectively. The validity of these choices of $L_2$ and $L_3$ can be verified
 by inspection.
\end{proof} 

The last needed ingredient is the following
algorithm for computing quadratic exponential sums
(Theorem 1.1 in \cite{H1}). 
\begin{thm}\label{thetaalg}
There are absolute constants $A_6$, $A_7$, $A_8$, $\kappa_7$, and $\kappa_8$ 
such that for any positive \mbox{$\epsilon<e^{-1}$}, any integer $K>0$, any integer $j \ge 0$, any $\alpha,\beta \in [0,1)$, and with  \mbox{$\nu:=\nu(K,j,\epsilon)=(j+1)\log (K/\epsilon)$},  the value of the function 
\begin{displaymath}
\frac{1}{K^j}\sum_{0\le k <K} k^j\, e^{2\pi i \alpha k+2\pi i \beta k^2}\,,
\end{displaymath}
can be computed to within $\pm \, A_6\, \nu ^{\kappa_7} \epsilon$ using \mbox{$\le A_7\, \nu^{\kappa_8}$} arithmetic operations on numbers of $\le A_8\, \nu^2$ bits.
\end{thm}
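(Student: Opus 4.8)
The plan is to reduce the sum to a form where van der Corput's method (specifically, the Poisson summation / $B$-process) can be applied to shorten its length by a constant factor, and then iterate. The statement to be proved is Theorem~\ref{thetaalg}, but since its proof is precisely the content of \cite[Theorem 1.1]{H1}, what follows is a sketch of how one would reconstruct that argument rather than a claim of novelty.

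First I would set up the iteration. Write $F(K,j,\alpha,\beta) = K^{-j}\sum_{0\le k<K} k^j e^{2\pi i(\alpha k+\beta k^2)}$. The key analytic input is a Poisson-type transformation formula (the van der Corput $B$-process for quadratic phases): summing $e^{2\pi i(\alpha k+\beta k^2)}$ over an interval of length $K$ can be rewritten, up to an explicit completed-square Gauss-sum factor and a controlled number of boundary/tail terms, as a sum of essentially the same shape but over a new interval of length proportional to $\beta K$ — with a new linear coefficient governed by $\alpha$ and a new quadratic coefficient governed by $1/\beta$ (reduced modulo $1$). The role of the $k^j$ weight is handled by expanding it via the binomial theorem around the stationary point, or by repeatedly differentiating the generating identity in a parameter; in either case one reduces the $j>0$ case to $O(j)$ instances of lower-degree weighted sums, which is where the $\nu=(j+1)\log(K/\epsilon)$ factor enters. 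The crucial normalization step is an explicit preprocessing that brings $\alpha,\beta$ into ``good position'' (i.e.\ $\beta$ bounded away from $0$ and $1$, or $\beta$ so small the sum is handled directly) before each application of the $B$-process, so that the length genuinely contracts by a factor $\le 1/2$.

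Second I would run the recursion $O(\log K)$ times. Each step replaces a sum of length $K$ by $O(1)$ sums of length $\le K/2$ (after normalization), plus $O(j)$ correction terms coming from the weight expansion, plus $O(1)$ explicitly computable boundary terms and a Gauss-sum prefactor. Unwinding the recursion tree, the total number of leaves is $K^{o(1)}$ in the worst accounting but in fact poly$(\nu)$ after the bookkeeping is done carefully, since the branching is bounded and the depth is $O(\log K)$; summing the per-node cost gives the claimed $A_7\nu^{\kappa_8}$ operations. One terminates the recursion once the length drops below a fixed constant (or below $\nu^{O(1)}$), at which point the sum is evaluated directly term by term.

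Third I would track the errors. Each $B$-process application incurs an error from: truncating the Poisson sum, approximating the stationary-phase contributions, round-off in evaluating the Gauss-sum factor and the transcendental functions, and the error already present in the input from previous stages. The point is that all of these are multiplicatively controlled: working with $\le A_8\nu^2$ bits keeps each per-step round-off below $\epsilon\cdot(\text{small})$, and since there are only poly$(\nu)$ operations the accumulated error is $\le A_6\nu^{\kappa_7}\epsilon$, as claimed. The main obstacle — and the part that genuinely requires the detailed work of \cite{H1} — is the \emph{normalization step}: ensuring that, after each transformation, the parameters $\alpha,\beta$ can be algorithmically brought back into a range where the next $B$-process strictly shortens the sum, while simultaneously keeping the number of resulting subsums bounded and the parameter bit-lengths from blowing up. Handling the degenerate ranges of $\beta$ (very small, or very close to a rational with small denominator) without losing the constant-factor contraction is the technical heart of the matter; everything else is the bookkeeping of a balanced recursion with explicit error budgets.
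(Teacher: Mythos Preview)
The paper does not prove Theorem~\ref{thetaalg} at all: it is simply quoted from \cite[Theorem~1.1]{H1}, with no argument given beyond the informal description in \textsection\ref{sec: intro}. Your proposal correctly recognizes this and offers a sketch of the \cite{H1} argument that matches the paper's own summary (repeated van der Corput $B$-process, normalization of $\alpha,\beta$ to guarantee contraction by a factor $\ge 2$, $O(\log K)$ iterations), so there is nothing to compare and nothing to correct.
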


By combining lemma~\ref{lem:compC}, lemma~\ref{lem:l1}, and Theorem~\ref{thetaalg}, we obtain the
following algorithm for computing theta sums twisted by a character $\chi$. This algorithm is  
how the power-savings in computing $L(s,\chi)$ will be achieved in Theorem~\ref{mainthm} later.
\begin{thm}\label{lem:l2}
There are absolute constants $A_9,\ldots,A_{12}$, $\kappa_9,\ldots,\kappa_{11}$,
such that for any positive integer $q = p_1^{a_1}\cdots p_h^{a_h}$ (where $p_j$ are distinct primes),
any given character $\chi\bmod q$, any positive $\epsilon < e^{-1}$, any integer $K>0$, any integer $v$, and any integer $j\ge 0$, 
and with  $\nu_1:= \nu_1(K,q,v,\epsilon) = (j+1)\log (qK(|v|+1)/\epsilon)$, the function
\begin{equation}
S_{\chi}(K,v,j;\alpha,\beta):=\frac{1}{K^j}\,\sum_{0\le k < K} k^j\, \chi (v+k)\,e^{2\pi i \alpha k+2\pi i \beta k^2}\,,
\end{equation} 
can be computed to within $\pm \epsilon$ using 
$\le A_9 \, p_1^{\lceil a_1/3\rceil} \cdots p_h^{\lceil a_h/3\rceil}\,\nu_1^{\kappa_9}$ 
operations on numbers of $\le A_{10}\,\nu_1^2$ bits,
provided a precomputation, that depends on $q$ only, costing
$\le A_{11} \, (p_1+\cdots+ p_h) \,\log^{\kappa_{10}} q$ operations, and requiring 
$\le A_{12} \, (p_1+\cdots +p_h) \,\log^{\kappa_{11}} q$ bits of storage, is performed. 
\end{thm}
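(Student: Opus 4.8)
The plan is to reduce $S_{\chi}(K,v,j;\alpha,\beta)$ to a bounded number of twisted quadratic exponential sums of the form treated in Theorem~\ref{thetaalg}, by splitting the range of summation according to residues modulo $Q:=p_1^{\lceil a_1/3\rceil}\cdots p_h^{\lceil a_h/3\rceil}$. First I would handle the easy case $\gcd(v+k,q)>1$ separately: writing $q=\prod p_j^{a_j}$, the terms with $p_j\mid v+k$ for some $j$ either vanish (if $\chi_j$ is imprimitive at $p_j$ in the appropriate sense, i.e.\ $p_j\mid v+k$ forces $\chi(v+k)=0$) and in any case the indices $k$ with $\gcd(v+k,q)>1$ that carry a nonzero $\chi(v+k)$ can be enumerated and their contribution treated directly; but more cleanly, since $\chi(v+k)=0$ whenever $\gcd(v+k,q)>1$, those terms simply drop out. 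So assume $\gcd(v+k,q)=1$ throughout. Then write $k = l + m Q$ with $0\le l<Q$ and $0\le m < \lceil K/Q\rceil$ (the last block possibly truncated), and factor
\begin{equation}
\chi(v+k) = \chi(v+l+mQ) = \chi(v+l)\,\chi\!\left(1 + mQ\,\overline{(v+l)}\right),
\end{equation}
where $\overline{(v+l)}$ is the inverse of $v+l$ modulo $q$, which exists since $\gcd(v+l,q)=\gcd(v+k,q)=1$ (the residue of $v+k$ mod each $p_j$ depends only on $l$ because $Q$ is a multiple of $p_j^{\lceil a_j/3\rceil}$... wait, one must be careful: $\gcd(v+l,p_j)=\gcd(v+l+mQ,p_j)$ since $p_j\mid Q$, so this is fine).

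Next I would apply the Postnikov formula of Lemma~\ref{lem:l1} to each factor $\chi_j(1+mQ\,\overline{(v+l)})$. Since $Q$ is divisible by $p_j^{\lceil a_j/3\rceil} = p_j^{b_j}$, we may write $mQ\,\overline{(v+l)} = p_j^{b_j} x_{j}$ with $x_j = m\,(Q/p_j^{b_j})\,\overline{(v+l)}$, and Lemma~\ref{lem:l1} gives $\chi_j(1+p_j^{b_j}x_j) = \exp(c_j^{(1)} x_j + c_j^{(2)} x_j^2)$ for explicit rational multiples of $2\pi i$ depending on an integer $L=L(\chi_j,p_j,a_j,b_j)$ — and $L$ itself is computed from one value $\chi_j(1+p_j^{b_j})$ via Lemma~\ref{lem:compC} and the congruence \eqref{eq:Lcong} (resp.\ \eqref{eq:Lcong1} for $p=2$, or the explicit small constants when $2^{a}\le 8$). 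Expanding $x_j^2$ as a quadratic in $m$ and summing the contributions over $j$, the product $\prod_j \chi_j(1+p_j^{b_j}x_j)$ becomes $\exp(2\pi i(\alpha' m + \beta' m^2))$ for rationals $\alpha',\beta'$ that depend on $l$, $v$, $\chi$, and $q$ only, assembled via CRT. Folding in the original $e^{2\pi i\alpha k + 2\pi i\beta k^2}$ with $k=l+mQ$ (which contributes further polynomial-in-$m$ terms of degree $\le 2$ to the exponent), and expanding $k^j=(l+mQ)^j$ by the binomial theorem into a sum of $\le j+1$ monomials $m^{i}$, the inner sum over $m$ becomes a linear combination of at most $j+1$ exponential sums $\sum_m m^{i} e^{2\pi i(\tilde\alpha m + \tilde\beta m^2)}$, each computable to within $\pm\epsilon'$ by Theorem~\ref{thetaalg} after reducing $\tilde\alpha,\tilde\beta$ modulo $1$.

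Then I would assemble the cost. There are $Q = p_1^{\lceil a_1/3\rceil}\cdots p_h^{\lceil a_h/3\rceil}$ choices of $l$; for each, computing $\overline{(v+l)} \bmod q$, the integers $L$ for each $p_j$ (using the precomputed tables from Lemma~\ref{lem:compC}, whose precomputation cost $O((\sum p_j)\log^{\kappa_5}q)$ and storage $O((\sum p_j)\log^{\kappa_6}q)$ are exactly the precomputation claimed here), the phases $\tilde\alpha,\tilde\beta$, and the $\le j+1$ invocations of Theorem~\ref{thetaalg}, all cost a poly-log factor in $qK(|v|+1)/\epsilon$ times $(j+1)$; multiplying by $Q$ gives the stated $\le A_9\,p_1^{\lceil a_1/3\rceil}\cdots p_h^{\lceil a_h/3\rceil}\,\nu_1^{\kappa_9}$ operations, and the bit-length bound $\le A_{10}\nu_1^2$ comes from the $O(\nu^2)$-bit requirement in Theorem~\ref{thetaalg} together with the observation that all the CRT and discrete-log bookkeeping uses only $O(\log q)$-bit integers. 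Choosing $\epsilon' = \epsilon/(A_6\,(\text{number of summands})\,\nu^{\kappa_7})$, with the number of summands $\le (j+1)Q \le (j+1)q$, keeps the accumulated error within $\pm\epsilon$ at the cost of only a poly-log increase in $\nu_1$, which is absorbed. The main obstacle, and the step requiring genuine care, is the bookkeeping of the quadratic phases: one must verify that after substituting $k=l+mQ$, expanding each Postnikov exponent, clearing denominators via CRT across the $h$ prime powers, and incorporating the given $\alpha,\beta$, the resulting exponent in $m$ is \emph{exactly} quadratic (degree $\le 2$) with coefficients reducible mod $1$ to elements of $[0,1)$ computable in poly-log time — this is where the choice $b_j=\lceil a_j/3\rceil$ is essential (so that $p_j^{3b_j}\equiv 0 \bmod p_j^{a_j}$, killing cubic and higher terms, cf.\ \eqref{eq:re1}), and where the treatment of the small-$2$-power exceptional cases of Lemma~\ref{lem:l1} must be threaded through without breaking the uniform bound.
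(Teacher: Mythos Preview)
Your proposal is correct and follows essentially the same approach as the paper: split modulo $C:=p_1^{\lceil a_1/3\rceil}\cdots p_h^{\lceil a_h/3\rceil}$ (your $Q$), factor $\chi(v+l+Cm)=\chi(v+l)\,\chi(1+\overline{v+l}\,Cm)$, apply Lemma~\ref{lem:l1} to each $\chi_j$-component to obtain an exactly quadratic phase in $m$, binomially expand $(l+Cm)^j$, and invoke Theorem~\ref{thetaalg} on the resulting $\le j+1$ quadratic sums for each of the $C$ residues. The only cosmetic differences are that the paper first treats the special case $v=j=\alpha=\beta=0$ before the general one, and that the combined phase $(\alpha_1,\beta_1)$ is obtained simply by summing the $h$ Postnikov exponents rather than via an explicit CRT step.
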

\noindent
\textit{Remark}. The precomputation requirement comes directly from lemma~\ref{lem:compC}. 
The constants $\kappa_{10}$ and
$\kappa_{11}$ are the same as $\kappa_5$ and $\kappa_6$ in lemma~\ref{lem:compC}, respectively.
\begin{proof}
Since $\chi$ has period $q$, we have $\chi(n) = \chi(\tilde{n})$, where 
$\tilde{n} := n-\lfloor n/q\rfloor$. As $\tilde{n}$ can be computed in poly-log time (in $q$ and $n$), 
and as $0\le \tilde{n} < q$, then we only need to know how to compute $\chi(n)$ for $0\le n < q$.
By lemma~\ref{lem:compC}, once a precomputation costing 
$O((p_1+\cdots +p_h) \,(\log q)^{\kappa_5})$ operations and requiring
$O((p_1+\cdots +p_h) \,(\log q)^{\kappa_6})$
 bits of storage is performed,  the value of $\chi(n)$ for any $0\le n<q$ can be computed to within 
 $\pm \epsilon/(2K)$ using $O(\log^{\kappa_4}(qK/\epsilon))$ operations on numbers of 
 $O(\log (qK/\epsilon))$ bits using the precomputed values. Since such a precomputation 
 is permitted by the theorem, we may assume from now on that $\chi(n)$ can be 
 computed to within $\pm \epsilon/(2K)$ for any $0\le n<K+v$ in $\ll \nu_1^{\kappa_4} +\log(|v|+1) \ll \nu_1^{\kappa_4+1}$ time.

Let us first prove the lemma in the simpler situation 
$v,j,\alpha,\beta=0$; i.e. for $S_{\chi}(K)$. To this end, 
define $C:= C_q=p_1^{\lceil a_1/3\rceil} \cdots p_h^{\lceil a_h/3\rceil}$ and $K_l := K_{l,C}= \lceil (K-l)/C \rceil$.
Then, split the range of summation in $S_{\chi}(K)$ into arithmetic progressions 
\begin{equation}\label{eq:qqq}
S_{\chi}(K) = \sum_{\substack{0\le l < C\\ (l,q)=1}} \chi(l)\sum_{0\le k < K_l} \chi(1+\overline{l}\, C\,k)\,.
\end{equation}
Now, $\chi=\chi_1\cdots\chi_h$, where $\chi_j \bmod{p_j^{a_j}}$.
So $\chi(1+\overline{l}Ck) = \chi_1(1+\overline{l}Ck)\cdots \chi_h(1+\overline{l}Ck)$.
Applying lemma~\ref{lem:l1} to each $\chi_j(1+\overline{l}Ck)$ separately, with $\tilde{\chi}=\chi_j$, 
$a = a_j$, $b = \lceil a_j/3\rceil$,
and $x = \overline{l}C k / p^b$ (note that $x$ is an integer since, by definition, $p^b$ divides $C$), 
we can express each $\chi_j(1+\overline{l}Ck)$ as a quadratic exponential in $k$.
Each such application of lemma~\ref{lem:l1} involves two steps. 
First, one determines the integer $0\le  B < p^{a-b}$ satisfying 
$\tilde{\chi}(1+p^b)=\exp(2\pi i B/p^{a-b})$, which is straightforward since 
$\tilde{\chi}(1+p^b)$ can be computed 
using the already precomputed look-up tables from lemma~\ref{lem:compC}. 
Second, one solves the congruence \eqref{eq:Lcong} or \eqref{eq:Lcong1} 
for $L$, which can be done fast via the Euclidean algorithm.
Put together, the inner sum in (\ref{eq:qqq}) can be expressed in the form
\begin{equation}\label{eq:qq1}
\begin{split}
\sum_{0\le k < K_l} \chi(1+\overline{l} \,C\,k) &= \sum_{0\le k < K_l} \chi_1(1+\overline{l} \,C\,k) \cdots \chi_h(1+\overline{l} \,C\,k) \\
&= \sum_{0\le k < K_l} e^{2\pi i \alpha_1 k+2\pi i \beta_1 k^2}\,,
\end{split}
\end{equation} 
where $\alpha_1,\beta_1\in [0,1)$ are constants, depending on $\chi$, $C$, and $\overline{l}$ only 
(so independent of $k$), whose values 
can be determined quickly by solving at most $h$ congruences like \eqref{eq:Lcong}
and \eqref{eq:Lcong1}. By Theorem~\ref{thetaalg}, the exponential sum on the r.h.s. of (\ref{eq:qq1}) 
can each be computed to within $\pm \epsilon/(2C)$ in poly-log time (in $K$ and $C/\epsilon$). Since there are
at most $C$ such sums to be computed, the lemma follows
for $S_{\chi}(K)$.

We extend the previous method to the generalized sum $S_{\chi}(K,v,j;\alpha,\beta)$.
To begin, define the coefficients $d_{l,r}:=d_{l,r,j,C,K}$ via the binomial expansion
\begin{equation}
K^{-j} (l+Ck)^j =: \sum_{r\le j} d_{l,r} (k/K_l)^r\,.
\end{equation}
So $d_{l,r}$ are explicitly given by
\begin{eqnarray}\label{eq:dlr}
d_{l,r}=\binom{j}{r} \,\frac{l^{j-r}\,C^r\,(K_l)^r}{K^j}\,.
\end{eqnarray}
Then,  lemma~\ref{lem:l1} yields
\begin{equation}\label{eq:qqq1}
\begin{split}
&S_{\chi}(K,v,j;\alpha,\beta)  \\
&\quad = \sum_{\substack{0\le l < C\\ (l+v,q)=1}} \chi(l+v)\,e^{2\pi i \alpha l + 2\pi i \beta l^2}\,\sum_{0\le k < K_l} K^{-j}(l+C\,k)^j\,\chi(1+\overline{l+v}\, C\,k) e^{2\pi i (\alpha+2l\beta) C k + 2\pi i \beta C^2 k^2}\\
&\quad = \sum_{\substack{0\le l < C\\ (l+v,q)=1}} \chi(l+v)\,e^{2\pi i \alpha l + 2\pi i \beta l^2}\,\sum_{0\le r \le j} \frac{d_{l,r}}{(K_l)^r}\sum_{0 \le k < K_l} k^r e^{2\pi i ((\alpha+2l\beta)C+\alpha_2) k+2\pi i (\beta C^2+\beta_2) k^2}\,,
\end{split}
\end{equation}
where $\alpha_2,\beta_2\in [0,1)$ are constants, depending on $\chi$, $C$, and $\overline{l+v}$  
only (so independent of $k$), whose values can be 
computed quickly by solving $\le h$ congruences like \eqref{eq:Lcong}
and \eqref{eq:Lcong1}. 

We digress briefly to discuss how to compute $d_{l,r}$.
There are several ways for doing this; the  following 
suffices for the current exposition. For each $r$, 
one precomputes the factorials $r!$,
$(j-r)!$, and $j!$, exactly,
by sequential multiplication of integers.
Since $r\le j$, this can be done using $\le j$ operations on integers of
$\ll (j+1)\log(j+1)\ll (j+1)^2$ bits\footnote{This step (and similar ones involving binomial coefficients) 
requiring $(j+1)\log(j+1)\ll (j+1)^2 \ll \nu_1^2$ bits, 
is essentially the reason why this theorem is stated with the upper bounds $O(\nu_1^2)$ on the number of bits
(and Theorem~\ref{thetaalg} (\cite[Theorem 1.1]{H1}) was stated with the upper bound $O(\nu^2)$ on the number of bits).
Otherwise, all that is required is $\ll j + \log(qK/\epsilon)$-bit arithmetic (and 
$\ll j+\log(K/\epsilon)$-bit arithmetic, respectively). It is plain 
that one can prove this is in fact all is required.}
and requiring $\ll  (j+1)^2$ bits of storage, which is allowed by the theorem. 
The binomial coefficient can then be computed to
within $\pm \epsilon/(4C(j+1))$ in the form $j!/(r!(j-r)!)$ 
which requires three operations on numbers of $\ll (j+1)^2$ bits using the precomputed values of the factorial.
Also, each of $l^{j-r}$, $C^r$, $(K_l)^r$, and $K^j$, can be computed exactly using
$\ll j$ operations on numbers of $O((j+1)\log(qK))$ bits.

To conclude, then, define $\tilde{\nu}:=\nu(K,j,\epsilon/q) = (j+1)\log(qK/\epsilon)$.
Theorem~\ref{thetaalg} ensures that each quadratic  sum in (\ref{eq:qqq1}) (the inner-most sums
in the last line) can be computed to 
within $\pm \epsilon/(4C(j+1))$ using $O(\tilde{\nu}^{\kappa_8})$ operations on numbers of 
$O(\tilde{\nu}^2)$ bits (since, by assumption, $j\le K$). Since there are $\le C(j+1)$ such sums, and since
the precomputed look-up tables required by lemma~\ref{lem:compC} are already available,
so each $\chi(l+v)$ can be computed to within $\pm \epsilon/(4C(j+1))$ 
using $O(\nu_1^{\kappa_4+1})$ operations,
then, on noting $\tilde{\nu} \le \nu_1$, we see that the overall
cost of computing $S_{\chi}(K,v,j;\alpha,\beta)$ to within $\pm \epsilon$ is 
$O(C (\nu_1 + \nu_1^{\kappa_4+1} +\nu_1^{\kappa_8+1}))$ operations. The theorem follows.
\end{proof}

\section{Application: computing $L(s,\chi)$} \label{sec: app}

We would like the starting point in this section to be an unsmoothed approximate functional equation
for $L(s,\chi)$ (i.e. a ``Riemann-Siegel'' type formula). This is because unsmoothed formulae make
it far simpler to apply subdivisions to the main sum, as we will do. 
On the downside, unsmoothed formulae of length square-root of the analytic conductor
are quite complicated to derive.
One might appeal to the main theorem in~\cite{Da}, for example,
which provides an unsmoothed formula, but which does not apply when $s$ is small,
and the explicit asymptotic constants in its remainder term have not been worked out explicitly.
Fortunately, given Theorem~\ref{lem:l2}, we can circumvent these difficulties easily, at least for
the purpose of the theoretical derivation. The reason is that, Theorem~\ref{lem:l2} will yield
the same power-saving for computing $L(s,\chi)$ even if one starts with a main sum of 
length $\lceil q^d(|s|+1)^d \rceil$, where $d$ is any fixed number, because it 
will be applied locally, to blocks in the main sum, and it depends on the 
block-length and the required precision in a poly-log way only.  

To this end, let $\chi\bmod{q}$ be a non-principal character, where $q=p_1^{a_1}\cdots p_h^{a_h}$. 
We first consider the case when 
$\chi$ is primitive. As before, let $\mathfrak{a} := (1-\chi(-1))/2$,
so $\mathfrak{a}$ is 0 or 1 according to whether $\chi$ is even or odd. Define
\begin{equation}
\xi(s,\chi):= \left(\frac{q}{\pi}\right)^{\frac{s}{2}} \Gamma\left(\frac{s+\mathfrak{a}}{2}\right) L(s,\chi)\,,
\end{equation}
and $\overline{\xi}(s,\chi):=\overline{\xi(\overline{s},\chi)}=\xi(s,\overline{\chi})$. 
We have the following functional equation 
\begin{equation}\label{eq:xifunceq}
\overline{\xi}(1-s,\chi) = \frac{i^{\mathfrak{a}}q^{1/2}}{\tau(\chi)} \xi(s,\chi)\,.
\end{equation}
Therefore, we may restrict our
computations of $L(s,\chi)$ to the half-plane $\Re(s)\ge 1/2$
since values of $L(s,\chi)$ elsewhere can be recovered routinely 
by the functional equation.
Here, the Gauss sum $\tau(\chi)$ can be computed to within $\pm \epsilon$ by Theorem~\ref{lem:l2},
which consumes about $p_1^{\lceil a_1/3\rceil}\cdots p_h^{\lceil a_h/3 \rceil}$ time,
up to a poly-log factor (in $q$ and $1/\epsilon$).
While the functional equation is valid for primitive $\chi$ only,
it will be apparent that our use of it is not essential, provided we restrict the 
computations to the half plane $\Re(s)>\sigma_0$, where $\sigma_0>0$ is fixed.
Alternatively, one can use the
expression for $L(s,\chi)$ in terms of $L(s,\chi_1)$, where $\chi_1$ is the 
primitive character inducing $\chi$, to enable 
the functional equation for $L(s,\chi_1)$ to be used instead.
However, for simplicity, we will assume $1/2\le \Re(s) \le 1$, say, from now on.

We use the P\'olya-Vinogradov inequality
to reduce the computation of $L(s,\chi)$ to computing 
a main sum $\sum_{n<M} \chi(n) n^{-s}$, where $M$ is chosen according to the desired 
precision. Specifically, if $\chi\bmod{q}$ is primitive then 
$|\sum_{N_1\le n<N_2} \chi(n)| < q^{1/2}\log q$,
and if $\chi \bmod{q}$ is induced by the primitive character $\chi_1\bmod{q_1}$, then  on
combining the estimates (see \cite[Chap. 23]{D})  
$|\sum_{N_1\le n<N_2} \chi(n)| < 2^{\omega(q/q_1)} (q_1)^{1/2}\log(q_1)$ and $2^{\omega(q/q_1)} \le d(q/q_1) \le 2 (q/q_1)^{1/2}$,
where $\omega(r)$ is the number of distinct prime factors of $r$, we obtain
\begin{equation}
|\sum_{N_1\le n<N_2} \chi(n)| < 2 (q/q_1)^{1/2} (q_1)^{1/2} \log(q_1) \le 2 q^{1/2} \log q\,.
\end{equation}
Therefore, by applying partial summation to $\sum_{n\ge M} \chi(n) n^{-s}$, we arrive at 
\begin{equation}\label{eq:lchi}
L(s,\chi) = \sum_{1\le n<M} \frac{\chi(n)}{n^s} + \mathcal{R}\,,
\end{equation}
where 
\begin{equation}\label{eq:lchierr}
|\mathcal{R}| \le \frac{2q^{1/2}\log q}{\Re(s) M^{\Re(s)}}\, (|s|+1)\,.
\end{equation}
For example, if $M \ge (6 q\log q)^{1/\Re(s)}$, 
then the main sum in \eqref{eq:lchi} approximates
$L(1/2,\chi)$ to within $\pm q^{-1/2}$.
As another example, if $q>900$ say, then 
we can ensure  $|\mathcal{R}|\le q^{-\lambda}(|s|+1)^{-\lambda}$  
by taking $M \ge q^d(|s|+1)^d $, $d = (\lambda+1)/\Re(s)$. 
Notice that, if the desired bound on $\mathcal{R}$ is reduced 
by a multiplicative factor of $\epsilon_1$, then $d$ changes to 
$d - \log(\epsilon_1)/(\Re(s) \log(q(|s|+1)))$, and so $d$ grows very slowly
(logarithmically) as we tighten the bound on $\mathcal{R}$.

We conclude that to prove Theorem~\ref{mainthm} it suffices to compute the main 
sum in \eqref{eq:lchi} with a suitable $M$.
Before presenting the proof, 
we take a brief detour to emphasize the following. While
the proof will yield the asymptotic complexity claimed 
in the theorem even when the length of the main sum is $M\ge
q^{100}(|s|+1)^{100}$, say,
it is better in practice to start with a shorter main sum such as the one provided
by a ``Riemann-Siegel'' type formula for $L(s,\chi)$ with explicit asymptotic 
constants in the remainder term.
On the other hand, if the available implementation of the algorithm 
of Theorem~\ref{lem:l2} is well-optimized, and supposing, for instance, that one wishes to compute 
$L(1/2,\chi)$, or perhaps only the low-lying zeros of $L(1/2+it,\chi)$, 
to within $\pm q^{-1/2}$ (so that choosing $M \ge (q(|s|+1))^3$ suffices),
then this issue might have a relatively little impact on the overall running time because, 
as mentioned before, the dependence of Theorem~\ref{lem:l2} on the length of the block and the
desired precision is only poly-log.
Therefore, in the proof of Theorem~\ref{mainthm},  
we prefer to retain the simplicity and uniformity provided by \eqref{eq:lchi},
as well as its indifference to whether the character is primitive or not.

\begin{proof}[Proof of Theorem~\ref{mainthm}]
Our goal is to prove an upper bound on the number of operations required
to compute $L(s,\chi)$ to within $\pm q^{-\lambda}(|s|+1)^{-\lambda}$,
where $\chi$ is a character $\textrm{mod}\,\,q$, and $q$ has prime
factorization $q=p_1^{a_1}\cdots p_h^{a_h}$.
The character $\chi$ should be presented to the algorithm as we detailed in \textsection{\ref{sec:compS}}.
Notice that the presentation of $\chi$ includes the factorization of $q$.
We assume, for convenience, that $q(|s|+1)\ge 10^3$.

In \eqref{eq:lchi}, we choose $M=\lceil q^d(|s|+1)^d \rceil$ where $d=(\lambda+2)/\Re(s)$.
This choice of $M$ ensures, via estimate \eqref{eq:lchierr}, the assumption $q(|s|+1)\ge 10^3$, 
and the hypothesis $1/2 \le \Re(s) \le 1$ in the statement of the theorem, that 
$|\mathcal{R}| \le 0.1\, q^{-\lambda}(|s|+1)^{-\lambda}$.
Next, we use lemma~\ref{lem:compC}, and the periodicity of $\chi$, to enable the 
evaluation of $\chi(n)$ to within $\pm 0.1\, q^{-\lambda}(|s|+1)^{-\lambda}/ M$ 
for any $0\le n< M$ using $O((\lambda+d+1)^{\kappa_4}\log^{\kappa_4}(q(|s|+1)))$ 
operations on numbers of $O((\lambda+d+1)\log(q(|s|+1)))$
bits. The lemma requires precomputing look-up tables, which is done only once throughout this proof. 
The precomputation costs $O((p_1+\cdots +p_h)\, \log^{\kappa_5} q)$ operations, and 
requires $O((p_1+\cdots+ p_h)\,\log^{\kappa_6} q)$ bits of storage, which
is permitted by the theorem.

Next, let $M_1 = M_{1,q,s} := p_1^{\lceil a_1/3\rceil}\cdots p_h^{\lceil a_h/3\rceil}\,\lceil (1+|s|)^{1/3}\rceil$,
and divide the main sum into an initial sum and a ``bulk sum''
\begin{equation}
\sum_{1\le n < M} \frac{\chi(n)}{n^s}=\sum_{1\le n < M_1} \frac{\chi(n)}{n^s}+\sum_{M_1 \le n< M} \frac{\chi(n)}{n^s}\,.
\end{equation}
By appealing to lemma~\ref{lem:compC} to compute individual values of $\chi$, 
we see that the initial sum can be 
evaluated directly, to within $\pm 0.1\, q^{-\lambda}(|s|+1)^{-\lambda}$,
using $O(M_1\,(\lambda+d+1)\, \log(q(|s|+1)))$ operations, which falls within our target complexity.
So it remains to deal with the ``bulk sum'', which is where 
the power-savings will be achieved.  We perform the following dyadic subdivision
\footnote{The following subdivision scheme is more efficient in practice than a dyadic subdivision (by a constant factor) 
because it yields larger blocks to feed into Theorem~\ref{lem:l2} later: Let $\tilde{v}_0 = M_1$, 
and sequentially define $\tilde{K}_r := \min\{\lceil \tilde{v}_r/(|s|+1)^{1/3}\rceil, M-\tilde{v}_r\}$,
$\tilde{v}_{r+1} := \tilde{v}_r + \tilde{K}_r$, to obtain
\begin{equation}
\sum_{M_1 \le n< M} \chi(n) n^{-s} = \sum_{0\le r < \tilde{R} } \sum_{0\le
k<\tilde{K}_r} \chi(\tilde{v}_r+k)(\tilde{v}_r+k)^{-s}\,, \nonumber
\end{equation}
where $\tilde{R}:=\tilde{R}_{s,M_1,M}$ can be shown to satisfy $\tilde{R} \ll (|s|+1)^{1/3}$.
The reason we use a dyadic subdivision in the proof, even though it is less efficient, is 
because it is likely more familiar, and so it might be marginally simpler.}
of the ``bulk sum''
\begin{equation}\label{eq:tailsum0}
\sum_{M_1 \le n< M} \frac{\chi(n)}{n^s} =\sum_{I\in\mathcal{I}} \sum_{n\in I} \frac{\chi(n)}{n^s} \,,
\end{equation} 
where $\mathcal{I}$ is the set of consecutive subintervals $I$
that partition $[M_1,M)$. Each subinterval in $\mathcal{I}$ is of the form
$I= [N,2N)$, $N \in [M_1,M)$, except possibly the last subinterval, which is of the form $[N,M)$.
In explicit terms, if we define $d_0 := \lfloor \log(M/M_1)/\log 2  \rfloor$, then 
$\mathcal{I} = \{[2^r M_1,2^{r+1}M_1), 0\le r < d_0\} \cup \{[2^{d_0}M_1, M )\}$. 
Note that 
\begin{equation}
|\mathcal{I}| \le \frac{\log(M/M_1)}{\log 2} + 1 \le 10 \, d\log (q(|s|+1)) \,.
\end{equation}
Therefore, if one plans on computing each inner sum in \eqref{eq:tailsum0} separately, 
as we will do, then computing the full sum will multiply the cost by an extra factor of $10 \, d\log (q(|s|+1))$ only,
which can be absorbed by our target complexity. Given this, it 
suffices to show how to compute each of the sums $\sum_{n\in I} \chi(n) n^{-s}$.

For each subinterval $I=[N,2N)$ (except possibly the last one,
which, in any case, is dealt with similarly), we define
$K := K_{N,s}= \lceil N/(|s|+1)^{1/3} \rceil$. 
We let $\mathcal{V}:=\mathcal{V}_{N,K}(I) = \{N,\ldots,N + \lfloor N/K\rfloor K\}$,
so $\mathcal{V}$ is a set of equidistant points in $[N,2N)$ separated by distance $K$.
Therefore, we have:
\begin{equation} \label{tailsum}
\begin{split}
\sum_{n\in I} \frac{\chi(n)}{n^s}&= \sum_{N\le n< 2N} \chi(n)\,e^{-s\log n}\\
&= \sum_{v\in\mathcal{V}} \sum_{0 \le k < K} \chi(v+k)\,e^{-s\log (v+k)} +\sum_{0\le k < K'} \chi(v'+k)\,e^{-s\log(v'+k)}\,,
\end{split}
\end{equation}
where the length of the second (tail) sum in \eqref{tailsum} satisfies $0\le K' <K$. 
Now, by definition, $N/K \le (|s|+1)^{1/3}$, and 
in particular $|\mathcal{V}|= \lfloor N/K \rfloor +1 \le (|s|+1)^{1/3}+1$.
So, to prove the theorem, it suffices to show that each inner sum in \eqref{tailsum} 
(as well as the tail sum, which is handled similarly) can be
computed in $p_1^{\lceil a_1/3\rceil}\cdots p_h^{\lceil a_h/3\rceil}$ times 
poly-log time. This will be accomplished via Theorem~\ref{lem:l2} as follows.
We apply the expansion $\log(1+x) = x - x^2/2 + x^3/3 -\cdots$ to $\log(1+k/v)$, to obtain
\begin{equation}\label{eq:afe4}
\begin{split}
\sum_{0 \le k < K} \chi(v+k)\,e^{-s\log (v+k)}&= e^{-s\log v} \sum_{0\le k < K} \chi(v+k)\,e^{-s\log (1+k/v)}\\
&= e^{-s\log v} \sum_{0\le k < K} \chi(v+k)\,e^{-s(\frac{k}{v} - \frac{k^2}{2v^2} + \frac{k^3}{3v^3}-\cdots)}\,.
\end{split}
\end{equation} 
By our choice of $K$, and the facts $v\ge N$ and $N\ge M_1$, it follows that 
$|k/v| \le (|s|+1)^{-1/3}$, and so the cubic and higher terms in
$s\log(1+k/v) = s k /v - s (k/v)^2/2 + s(k/v)^3/3 -\cdots$ are $O(1)$. 
More precisely, $|s|(k/v)^{3+j_1}/(3+j_1) \le (|s|+1)^{-j_1/3} \le (3/2)^{-j_1/3}$. 
Thus, using Taylor expansions (in the third equality below), we obtain 
\begin{equation}\label{lasteq}
\begin{split}
e^{-s(\frac{k}{v} - \frac{k^2}{2v^2} + \frac{k^3}{3v^3}-\cdots)} &= e^{-\frac{i\Im(s)}{v}k + \frac{i\Im(s)}{2v^2}k^2} e^{-\frac{\Re(s)}{v}k + \frac{\Re(s)}{2v^2}k^2 - \frac{s}{3v^3}k^3 +\cdots}\\
&= e^{-\frac{i\Im(s)}{v}k + \frac{i\Im(s)}{2v^2}k^2} \times \\
&\qquad e^{-\frac{\Re(s)K}{v}\frac{k}{K} + \frac{\Re(s)K^2}{2v^2}\frac{k^2}{K^2} - \frac{sK^3}{3v^3}\frac{k^3}{K^3} 
+\cdots \pm \frac{sK^{J_0}}{J_0 v^{J_0}}\frac{k^{J_0}}{K^{J_0}}} + \mathcal{E}^{'}_{s,v,k,J_0}\\
&=  e^{-\frac{i\Im(s)}{v}k + \frac{i\Im(s)}{2v^2}k^2} \sum_{0\le j < J} z_{j,s,v,J_0} \frac{k^j}{K^j} +
\mathcal{E}_{s,v,k,J} + \mathcal{E}^{'}_{s,v,k,J_0}\,.
\end{split}
\end{equation}
Since $|s|(k/v)^{3+j_1}/(3+j_1) \le (3/2)^{-j_1/3}$, 
the truncation error $\mathcal{E}^{'}_{s,v,k,J_0}$  satisfies 
$|\mathcal{E}^{'}_{s,v,k,J_0}| <  0.1 \, q^{-\lambda}(|s|+1)^{-\lambda}/(MK)$
when $J_0\ge \tilde{J}_0$, where $\tilde{J}_0 \ll (d+\lambda+1) \log (q(|s|+1))$. 
Thus, it suffices to choose $J_0 = \lceil\tilde{J}_0\rceil$.
We similarly claim that  $|\mathcal{E}_{s,v,k,J}| <  0.1 \, q^{-\lambda}(|s|+1)^{-\lambda}/(MK)$
when $J\ge \tilde{J}$, where $\tilde{J} \ll (d+\lambda+1) \log (q(|s|+1))$,  
and so it suffices to take $J = \lceil\tilde{J}\rceil$. 
To see why this bound on $\tilde{J}$ holds, consider the function $\eta(w) := e^{\tau_1 w + \cdots + \tau_{J_0} w^{J_0}}$,
where $\tau_1 := -\Re(s)K/v$, $\tau_2 := \Re(s)K^2/(2v^2)$, 
and $\tau_j :=  (-1)^{j+1} s K^j/(j v^j)$ for $3\le j\le J_0$, note that 
$\tau_1,\tau_2\ll 1$, and recall that 
$|\tau_{3+j_1}| \le (3/2)^{-j_1/3}$, then, by a standard application of Cauchy's
theorem, we obtain
$|z_{j,s,v,J_0}| \le (2\pi)^{-1} |\int_{|w|=5/4} \eta(w)/w^{j+1}\,dw| \ll (5/4)^{-j}$.
Moreover, the coefficients $z_{j,s,v,J_0}$ (which are independent of $k$) can be computed fast as follows. 
Let $\tau_j$ be defined as before, and define the polynomials $P_r(w)$ via the recursion: 
$P_0(w) :=1$, $P_r(w) = (P'_{r-1}(w) + P_{r-1}(w) Q'(w))/r$, where $Q(w):= \tau_1 w + \cdots + \tau_{J_0} w^{J_0}$, 
and $P'(w)$ and $Q'(w)$ denote the derivative
with respect to $w$. Then it is fairly easy to see that $z_{j,s,v,J_0} = P_j(0)$. 
And to compute $z_{j,s,v,J_0}$, $0\le j \le J$,
it suffices to repeat the said recursion  $J+1$ times,  noting  that each repetition requires
$\ll (J+1)(J_0+1)$ operations only because it suffices to keep track of merely the first $J+1$ terms
in $P_r(w)$ throughout (we can discard the rest because $P_r(w)$ will be differentiated
at most $J$ times, then evaluated at zero). It follows that $z_{j,s,v,J_0}$, $0\le j \le J$, can 
be computed at a total cost $\ll (J+1)^2(J+1) \ll (d+\lambda+1)^3 \log^3 (q(|s|+1))$ operations.

By plugging \eqref{lasteq} back into \eqref{eq:afe4} 
and interchanging the order of summation, 
we see that the last sum in \eqref{eq:afe4} can be rewritten, 
to within $\pm\, 0.2 \, q^{-\lambda}(|s|+1)^{-\lambda} /M$,
a linear combination, with quickly computable coefficients, of $J+1$ sums of the form
\begin{equation}\label{eq:finsum}
\frac{1}{K^j} \,\sum_{0\le k < K} k^j\,\chi(v+k)\,e^{2\pi i \alpha k+2\pi i \beta k^2}\,,
\end{equation}
where $0\le j \le J \ll (d+\lambda+1) \log q(|s|+1)$, $\alpha = -\Im(s)/(2\pi v)$, and $\beta = \Im(s)/(4\pi v^2)$.
Letting $\nu_2:= (J+1)(d+\lambda+1) \log(q(|s|+1))\ll (\lambda+1)^2 \log^2(q(|s|+1))$,
it follows by Theorem~\ref{lem:l2} that 
each sum \eqref{eq:finsum} can be computed to within $\pm\, 0.1\, q^{-\lambda}(|s|+1)^{-\lambda}/M$ using 
$\ll p_1^{\lceil a_1/3\rceil}\cdots p_h^{\lceil a_h/3\rceil}\, \nu_2^{\kappa_9}$ operations
on numbers of $\ll \nu_2^2$ bits.
Since there are $\le |\mathcal{I}|\,(|\mathcal{V}|+1)\,(J+1) \ll (|s|+1)^{1/3} \nu_2^2$
such sums to be computed, then, on accounting for
all the truncation and finite precision errors introduced so far, we see that $L(s,\chi)$ can 
be computed to within $\pm\, q^{-\lambda}(|s|+1)^{-\lambda}$ using a further 
$\ll p_1^{\lceil a_1/3\rceil}\cdots p_h^{\lceil a_h/3\rceil} (|s|+1)^{1/3} \nu_2^{\kappa_9+2}$
operations. The theorem follows. 
\end{proof}

In anticipation of a practical implementation of the algorithm, let us
make a few more comments. 
In order to improve efficiency,  
one could assume a type of pseudo-randomness in the round-off errors
that accumulate from, say, summing a large number of terms; e.g.\ the 
sum over $\mathcal{V}$ in \eqref{tailsum}.
For example, one might model the round-off errors by a
sequence of independent identically distributed
random variables with mean zero, which therefore gives square-root 
cancellation in the aggregate error; see \cite{O2}. This way, 
the aggregate error is bounded in the $l^2$-norm (root-mean-square) rather than in the $l^1$-norm. 
Such a model is suitable in large-scale computations that focus on
statistics of zeros (e.g. \cite{O2}, \cite{Go}, and \cite{H3})
because it is robust, it increases the practical efficiency noticeably, and the $l^1$-error
obtained without assuming it might still suffice for many purposes such as 
verifying the Riemann hypothesis at relatively low height, or 
computing moments or zero statistics.
However, when checking the Riemann hypothesis in neighborhoods of very close zeros,
it might appear risky to rely on a model that 
assumes square-root cancellation  in the round-off errors 
since the Riemann hypothesis is itself, essentially, about square-root
cancellation (even though 
the cancellation in each situation occurs for different reasons).
Therefore, it is useful to have at least one algorithm implementation 
that controls the aggregate round-off error in the $l^1$--norm while also minimizing
the use of multi-precision arithmetic so as to avoid unnecessary increases in
the running time. Such an implementation can be carried out
with the aid of multi-precision packages (like \verb!MPFR! and \verb!GMP!).

\section{Comments on the general modulus case} \label{sec:general sum}

While the algorithm presented here for computing $S_{\chi}(K)$ does not yield a power-saving beyond $q^{1/2}$
when $q\in\{p,p^2,p^4\}$, and in fact it requires $q^{1+o(1)}$ time in the case $q=p$,
it is still consistent with the existence of a general $q^{1/3+o_{\lambda}(1)}$ algorithm
for computing character sums.
In the $t$-aspect (i.e.\ with $s=\sigma+it$ and thinking of $t$ large), there 
exists such an algorithm, as well as a faster one 
performing in $t^{4/13+o_{\lambda}(1)}$ time that relies on computing cubic
exponential sums; see~\cite{H2}. The similarities between
the algorithms in the $t$ and $q$ aspects rest heavily on the
power-full structure of the modulus, which suggests that 
in order to tackle the prime modulus case (and, likely, the square-free case)
we will need significant additional algorithms. 

In the remainder of this section, we give a
 general method for computing $S_{\chi}(K)$, where $\chi\bmod{q}$ is any character, 
and with no assumption about the factorization of $q$.
This method is primarily of interest in 
the range $q^{1/2} < K < q$. (One can always reduce to the range $K<q$ by the 
periodicity of $\chi$ and the observation $\sum_{0\le n<q} \chi(n) = 0$ if
$\chi$ is nonprincipal.)
It might be illuminating, though, to first consider the following more general situation. 
Let $\alpha_0,\ldots,\alpha_{R-1}$ be any sequence of numbers. 
Then, under some favorable conditions on $\alpha_r$,
 we describe a procedure for computing $\sum_{r<L} \alpha_r$
that can be faster than a straightforward evaluation
when $R^{1/2}<L<R$. To this end, 
note that the domain of definition of $\alpha_m$ can be extended 
to all $m\in \mathbb{Z}$ by setting $\alpha_m:=\alpha_{m \bmod{R}}$, and 
let $\hat{\alpha}_0,\ldots,\hat{\alpha}_{R-1}$ 
denote the dual sequence under the discrete Fourier transform, so
\begin{equation}
\hat{\alpha}_m:= \sum_{0\le r < R} \alpha_r e^{\frac{2\pi i m r}{R}}\,.
\end{equation}   
Then we have the following functional equation, valid for
$W$ in Schwartz class, say,
\begin{equation} \label{eq:ft}
\sum_{m=-\infty}^{\infty} \alpha_m\, W(m) = \frac{1}{R}\,\sum_{m=-\infty}^{\infty} \hat{\alpha}_m\, \hat{W}\left(\frac{m}{R}\right)\,,
\end{equation}
where $\hat{W}(x) := \int_{\mathbb{R}} W(y)e^{-2\pi ixy}\,dy$.

To compute $\sum_{r<L} \alpha_r$, we choose $W:=I*H$, 
where $I$ is the indicator function of $[0,L]$, and $H(y):=e^{-\pi y^2/R}$.
Notice that $I(y)$ is very well-approximated by $W(y)$, to within $\pm R^{-\lambda}$ say, except for two intervals
of length $R^{1/2+o_{\lambda}(1)}$ near $y=0$ and $y=L$.
So the difference between the l.h.s. 
of \eqref{eq:ft} and $\sum_{r<L} \alpha_r$ can be computed to within $LR^{-\lambda}\ll R^{-\lambda+1}$ 
using a sum of length $R^{1/2+o_{\lambda}(1)}$ terms.
Also, $\hat{W}(y/R)$ decays, with $y$, like $e^{-\pi y^2/R}$, which implies that the r.h.s of \eqref{eq:ft} can 
be made of length $R^{1/2+o_{\lambda}(1)}$ by truncating the series with an error $\ll R^{-\lambda}$. 
Letting $R^{\epsilon_0}$ denote the cost 
of computing an individual point $\alpha_j$, 
and letting $R^{\delta_0}$ denote the cost of computing 
an individual dual point $\hat{\alpha}_j$, 
we see that $\sum_{r<L} \alpha_r$ (and more generally $\sum_{L_0\le r<L_0+L}\alpha_r$)
can be computed, via \eqref{eq:ft}, in about
$R^{1/2+\delta_0+o_{\lambda}(1)}+R^{1/2+\epsilon_0+o_{\lambda}(1)}$ time,
instead of $L^{1+\epsilon_0+o_{\lambda}(1)}$ time.

In the case of 
a primitive Dirichlet character $\chi\bmod{q}$, we take $R=q$, so 
the dual is $\hat{\chi}(m) = \overline{\chi}(m)\,\tau(\chi)$.\footnote{If $\chi$ 
is not primitive, say it is induced by $\chi_1\bmod q_1$,
then, for $(m,q)=1$, we have
$\hat{\chi}(m) = \overline{\chi}(m)\chi_1(q/q_1)\mu(q/q_1)\tau(\chi_1)$
where $\mu$ is the M\"obius function.} 
If, in addition, $\chi$ is real, then
we have a simple formula for the Gauss sum $\tau(\chi)$,
and so it is easy to see that $\hat{\chi}$ can be computed 
provably quickly (in poly-log time) by appealing to quadratic reciprocity.
We conclude that $\sum_{r<L}\chi(r)$ can always 
be computed in $q^{1/2+o_{\lambda}(1)}$ time for real primitive $\chi$.
In the case of a general character, we do not have quadratic reciprocity, but
we  can still express the Gauss sum as a ratio
of series involving $q^{1/2+o_{\lambda}(1)}$ terms by applying formula \eqref{eq:ft}
with $\alpha_n = \chi(n)$ and with $W(x)$ a self-similar function with sufficient decay (e.g. $e^{-\pi x^2/q}$). 

We also mention the following. Let $p$ be an odd prime, let
$\chi$ be the real primitive character $\textrm{mod}\,\,p$, and let
\begin{eqnarray}\label{eq:tdef}
g(x):=\frac{1}{\sqrt{p}}\,\sum_{0\le k <p} e^{\frac{2\pi i x k^2}{p}}\,.
\end{eqnarray} 
It is well-known that if $\chi$ is even and $(m,p)=1$, then $\chi(m)= g(m)$. 
Similarly, if $\chi$ is odd and $(m,p)=1$, then $\chi(m) = -ig(m)$. 
Assuming that $\chi$ is even, say, we extend the domain of definition of $\chi$ 
to all of $x\in\mathbb{R}$, setting $\chi(x):=g(x)$. 
By quadratic reciprocity, $\chi(x)$ can be computed, for integer $x$, 
in poly-log time.
A consequence of the algorithm  
for computing quadratic exponential sums in Theorem~\ref{thetaalg} 
is that one can still compute $\chi(x)$ in poly-log time for any $x\in\mathbb{R}$.

\vspace{3mm}

\noindent
\textbf{Acknowledgements.} I would like to thank the referee for many useful comments
and for pointing out the reference \cite{BLT}, 
Professor Peter Sarnak for suggesting the problem of computing character sums to me
and for the reference \cite{GL}, Professor Henri Cohen for a useful comment about Gauss sums, 
and Professor Roger Heath-Brown for mentioning the reference \cite{I}. I also would like
to thank Michael Rubinstein and Pankaj Vishe for related conversations.

\footnotesize


\begin{thebibliography}{9}
\bibitem[Bob]{Bob} J.W. Bober, \url{http://sage.math.washington.edu/home/bober/www/data/zeta/} 
\bibitem[Boo]{Boo} A.R. Booker, Quadratic class numbers and character sums, Math. Comp. 75 no. 255 (2006) 1481–1492.
\bibitem[BF]{BF} E. Bombieri and J.B. Friedlander, Dirichlet polynomial approximations to zeta functions, Annali della Scuola Normale Superiore di Pisa - Classe di Scienze Sér. 4 22 no. 3 (1995) 517--544.
\bibitem[BLT]{BLT} M.B. Barban, V.Yu. Linnik, and N.G. Tshudakov, On prime numbers in an arithmetic progression with a prime-power difference, Acta Arith. 9 (1964) 375-–390. 
\bibitem[CP]{CP} R. Crandall and C. Pomerance, \textit{Prime numbers: A computational perspective}, Second edition. Springer, New York, (2005). 
\bibitem[D]{D} H. Davenport, \textit{Multiplicative number theory}. Third edition. Revised and with a preface by Hugh L. Montgomery. Graduate Texts in Mathematics, 74. Springer-Verlag, New York, (2000).
\bibitem[Da]{Da} D. Davies, An approximate functional equation for Dirichlet $L$-functions, Proc. Roy. Soc. Ser.A v. 284 (1965) 224--236.
\bibitem[Ga]{Ga} P.X. Gallagher, Primes in progressions to prime power modulus, Invent. Math. 16 (1972) 191--201.
\bibitem[Go]{Go} X. Gourdon, The $10^{13}$ first zeros of the Riemann zeta function and zero computation at very large heights, unpublished manuscript.
\bibitem[GL]{GL} A.O. Gel'fond and Y.V. Linnik, \textit{Elementary methods in the analytic theory of numbers}, translated by D.E. Brown, edited by IN Sneddon, The MIT press, 1st Edition, (1966).
\bibitem[H1]{H1} G.A. Hiary, A nearly optimal method to compute the truncated theta function, its derivatives, and integrals, Ann. Math. 174-2 (2011) 859--889.
\bibitem[H2]{H2} G.A. Hiary, Fast methods to compute the Riemann zeta function, Ann. Math. 174-2 (2011) 891--946.
\bibitem[H3]{H3} G.A. Hiary, An amortized-complexity method to compute the Riemann zeta function, Math. Comp. 80 no. 275 (2011) 1785–1796.
\bibitem[H4]{H4} G.A. Hiary, \url{https://people.math.osu.edu/hiary.1/fastmethods.html}.
\bibitem[I]{I} H. Iwaniec, On zeros of Dirichlet's $L$ series, Invent. Math. 23 (1974) 97-–104.
\bibitem[IK]{IK} H. Iwaniec and E. Kowalski, \textit{Analytic number theory}, A.M.S Colloquium Publications vol 53 (2004).
\bibitem[LMO]{LMO} J.C. Lagarias, V.S. Miller, and A.M. Odlyzko, Computing $\pi(x)$: the Meissel-Lehmer method, Math. Comp. 44 no. 170 (1985) 537–560.
\bibitem[MV]{MV} H.L. Montgomery and R.C. Vaughan, \textit{Multiplicative number theory I. Classical theory}, Cambridge University Press, (2007).
\bibitem[O1]{O1} A.M. Odlyzko, Discrete logarithms: The past and the future, Designs, Codes, and Cryptography 19 (2000) 129--145. Reprinted in Towards a Quarter-Century of Public Key Cryptography, N. Koblitz, ed., Kluwer, (2000) 59--75.
\bibitem[O2]{O2} A.M. Odlyzko, The $10^{20}$-th zero of the Riemann zeta function and 175 million of its neighbors, unpublished manuscript.
\bibitem[Pom]{Pom} C. Pomerance, Elementary thoughts on discrete logarithms, In Algorithmic Number Theory: Lattices, Number Fields, Curves and Cryptography, Cambridge University Press, (2008) 385--396.
\bibitem[Pos]{Pos} A.G. Postnikov, On the sum of the characters for a prime power modulus, Izv. Akad. Nauk SSSR 19 (1955) 11-–16 (Russian).
\bibitem[R1]{R1} M.O. Rubinstein, Computational methods and experiments in analytic number theory, In: Recent Perspectives in Random Matrix Theory and Number Theory, F. Mezzadri and N.C. Snaith editors, (2005) 407--483.
\bibitem[R2]{R2} M.O. Rubinstein, L \verb!C++! class library  and command line program for computing zeros and values of $L$-functions, \verb!http://www.math.uwaterloo.ca/!$\sim$\verb!mrubinst!
\bibitem[S]{S} A. Sch\"onhage, Numerik analytischer Funktionen und Komplexit\"at, Jahresber. Deutsch. Math.-Verein.  92 no. 1 (1990) 1--20.
\bibitem[T]{T} E.C. Titchmarsh, The Theory of the Riemann zeta-Function, 2nd ed., revised by D.R. Heath-Brown. Oxford University Press, Oxford (1986).
\bibitem[V1]{V1} P. Vishe, Rapid computation of $L$-functions for modular form, to appear in IMRN.
\bibitem[V2]{V2} P. Vishe, A Fast Algorithm to Compute $L(1/2, f \times \chi_q)$,   arXiv:1202.6303v1 [math.NT].

\end{thebibliography}
\end{document}